\newenvironment{folding}{\endgroup}{\begingroup \def \@currenvir{folding}\edef \@currenvline{\on@line}}
	\newcommand{\cmark}{\ding{51}}%
	\newcommand{\xmark}{\ding{55}}%
	\tikzstyle{rectwhite} = [rectangle, rounded corners, minimum width=1cm, minimum height=1cm,text centered, draw=black, fill = pink!30]
	\tikzstyle{arrow} = [thick,->,>=stealth]
	\DeclareMathOperator{\hol}{\text{Hol}}
	\newcommand{\T}{\mathbb{T}}
	\newcommand{\D}{\mathbb{D}}
	\newcommand{\dd}{\, \mathrm{d}}
	\newcommand{\ddd}{\mathrm{d}}
	\newcommand{\abs}[1]{\vert #1 \vert}				% módulo
	\newcommand{\labs}[1]{\left\vert #1 \right\vert}	% módulo (aumenta o tamanho da barra, se necessário)
	\newcommand{\norm}[1]{\Vert #1 \Vert}				% norma
	\newcommand{\lnorm}[1]{\left\Vert #1 \right\Vert}	% norma (aumenta o tamanho da barra, se necessário)
	\newtheoremstyle{dotless}{}{}{}{}{\bfseries}{}{12pt}{}
	\theoremstyle{dotless}
	\newtheorem{thm}{Theorem}[section] % Theorem
	\newtheorem*{thm*}{Theorem} % Theorem (unnumbered)
	\newtheorem{lem}[thm]{Lemma} % Lemma
	\newtheorem*{lem*}{Lemma} % Lemma (unnumbered)
	\newtheorem{cor}[thm]{Corollary} % Corollary
	\newtheorem*{cor*}{Corollary} % Corollary (unnumbered)
	\newtheorem{prop}[thm]{Proposition} % Proposition
	\newtheorem*{prop*}{Proposition} % Proposition (unnumbered)
	\newtheorem{defn}[thm]{Definition} % Definition
	\newtheorem*{defn*}{Definition} % Definition (unnumbered)
	\newtheorem{rem}[thm]{Remark} % Remark
	\newtheorem*{rem*}{Remark} % Remark (unnumbered)
	\newtheorem*{exa*}{Example} % Example (unnumbered)
	\newtheorem*{exe*}{Exercise} % Exercise (unnumbered)
	\newtheorem*{aside*}{Aside} % Aside (unnumbered)
	\newtheorem{example}[thm]{Example} % Example
	\newtheorem*{example*}{Example} % Example (unnumbered)
	\newtheorem*{notation*}{Notation} % Notation (unnumbered)
\newcommand{\dee}{\mathrm{d}}
\title{
	{Zero-free half-planes of the $\zeta$-function via spaces of analytic functions}\\ \vspace{10mm}
}
\author{Aditya Ghosh \\Kobi Kremnizer\\S. Waleed Noor\\ Charles F. Santos  }
\begin{document}
	\pagenumbering{gobble}
	\maketitle
	\begin{abstract}
		In this article we introduce a general approach for deriving zero-free half-planes for the Riemann zeta function $\zeta$ by identifying topological vector spaces of analytic functions with specific properties. This approach is applied to weighted $\ell^2$ spaces and the classical Hardy spaces $ H^p $ ($ 0<p\leq2 $). As a consequence precise conditions are  obtained for the existence of zero-free half planes for the $\zeta$-function. 
	\end{abstract}
	\pagebreak
	\pagenumbering{arabic}
\section{Introduction} \label{sec: introduction}

The Riemann Hypothesis (RH) is equivalent to a completeness problem in $ L^2(0,1) $. This was first stated in the 1950's by Nyman \cite{nyman:paper} and Beurling \cite{beurling:closureproblempaper}:

\begin{thm} \label{Beurling L2(0,1) equivalence}
Define $\rho_\alpha(x) := \rho\left(\frac{\alpha}{x}\right) - \alpha\rho\left(\frac{1}{x}\right) $ for $0<\alpha<1$ (where $\rho(x)$ denotes the fractional part of $x$). If we denote $\nu := \mathrm{span}\{\rho_\alpha \mid 0<\alpha<1\}$ and $\mathbbm{1}_{(0,1)}$ the characteristic function of $(0,1)$, then the following statements are equivalent:
\begin{enumerate}
\item The RH holds true,

		\item $\mathbbm{1}_{(0,1)}$ belongs to the closure of $\nu$ in $L^2(0,1)$,

		\item $\nu$ is dense in $ L^2(0,1) $.
\end{enumerate}
\end{thm}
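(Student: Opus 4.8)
The plan is to transport the whole question, via the Mellin transform, into the Hardy space of a half-plane, where statements (2) and (3) become assertions about a single shift-invariant subspace and (1) becomes an assertion about the zeros of $\zeta$. First I would record the dictionary. The map $\mathcal M f(s)=\int_0^1 f(x)\,x^{s-1}\dd x$ is, up to a multiplicative constant, a unitary operator from $L^2(0,1)$ onto $H^2:=H^2(\{\Re s>1/2\})$ --- this is the Paley--Wiener theorem after the substitution $x=e^{-u}$. It sends $\mathbbm 1_{(0,1)}$ to $1/s$, and a direct computation --- using that $\lfloor\alpha/x\rfloor=0$ for $x\in(\alpha,1)$ when $0<\alpha<1$, together with $\int_0^1\lfloor 1/x\rfloor x^{s-1}\dd x=\zeta(s)/s$ --- gives the identity
\[
	\mathcal M\rho_\alpha(s)=\frac{\zeta(s)}{s}\,(\alpha-\alpha^s),\qquad 0<\alpha<1 .
\]
Setting $E:=\overline{\mathcal M(\nu)}\subseteq H^2$, statement (2) reads ``$1/s\in E$'' and statement (3) reads ``$E=H^2$''.

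Next I would exhibit $E$ as a shift-invariant subspace. From the identity above one gets the algebraic relation $\beta^s\,\mathcal M\rho_\alpha=\mathcal M\rho_{\alpha\beta}-\alpha\,\mathcal M\rho_\beta$ for $0<\alpha,\beta<1$, so multiplication by each exponential $\beta^s=e^{-as}$ (with $a=-\log\beta>0$) maps $\mathcal M(\nu)$ into itself and, being bounded on $H^2$, preserves $E$; hence $E$ is invariant under the translation semigroup. By the Beurling--Lax theorem (transported from the disc) $E=\Theta H^2$ for some inner function $\Theta$. Now $1/s$ is \emph{outer} on $\{\Re s>1/2\}$, and an outer function can lie in $\Theta H^2$ only when $\Theta$ is constant; so the three conditions ``$1/s\in E$'', ``$E=H^2$'' and ``$\Theta$ is constant'' are all equivalent. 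This already yields (2)$\Leftrightarrow$(3), and reduces the theorem to identifying $\Theta$.

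The core of the argument is to show $\Theta=B_\zeta$, the Blaschke product over the zeros of $\zeta$ in $\{\Re s>1/2\}$, counted with multiplicity. For this I would use the factorisation
\[
	\mathcal M\rho_\alpha(s)=\frac{(s-1)\zeta(s)}{s^2}\cdot\frac{s(\alpha-\alpha^s)}{s-1},
\]
where the pole of $\zeta$ at $s=1$ cancels the simple zero of $\alpha-\alpha^s$ there: the first factor lies in $H^2$ (by the convexity bound for $\zeta$ on vertical lines), the second lies in $H^\infty$, and both are analytic in a strip about the line $\Re s=1/2$ and do not decay exponentially as $\Re s\to+\infty$. Consequently neither factor can carry a singular inner part, so the inner part of $\mathcal M\rho_\alpha$ is $B_\zeta$ times the Blaschke product over the ``periodic'' zeros $\{1+2\pi i k/\log\alpha:k\in\Z\setminus\{0\}\}$ of $\alpha-\alpha^s$. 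The factor $B_\zeta$ is common to every $\alpha$, whereas the periodic factors contribute nothing to the greatest common inner divisor, since no point $1+2\pi i k/\log\alpha$ is shared by two distinct values of $\alpha$ (the quantity $2\pi k/\log\alpha$ varies continuously and cannot remain an integer). Therefore $\Theta=B_\zeta$, and $B_\zeta$ is constant iff $\zeta$ is zero-free in $\{\Re s>1/2\}$ --- equivalently, since $\zeta$ is already known to have no zeros with $\Re s\ge1$, iff RH holds. Combined with the previous paragraph this gives (1)$\Leftrightarrow$(2)$\Leftrightarrow$(3).

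The step I expect to be the genuine obstacle is establishing that $\frac{(s-1)\zeta(s)}{s^2}$ (and the elementary factor $\frac{s(\alpha-\alpha^s)}{s-1}$) has no singular inner factor on $\{\Re s>1/2\}$; equivalently, it is the point where one must control the \emph{unbounded} multiplier $\zeta$ when passing from the closed span of the $\mathcal M\rho_\alpha$ to that of the functions $\zeta^{-1}\mathcal M\rho_\alpha$. The required input is genuinely about $\zeta$: its analyticity across the line $\Re s=1/2$ with only polynomial growth there (which forces the outer factor, hence any would-be singular factor, to continue analytically past the line, where no singular inner function can live), together with the absence of exponential decay at $+\infty$, which rules out a point mass at infinity. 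Once this structural fact is in hand, the Mellin computation, the shift-invariance identity, the Beurling--Lax theorem and the greatest-common-divisor bookkeeping are all routine.
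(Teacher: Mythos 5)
The paper does not prove this theorem: it is quoted from Nyman and Beurling, with the reader referred to Bagchi's survey for proofs, so there is no in-paper argument to compare against. Your route is essentially the classical one of Beurling (and, in the form you give it, of Bercovici--Foias, whose description of $\overline{\nu}$ the paper cites in the introduction): transport by the Mellin transform, recognize $\overline{\mathcal{M}(\nu)}$ as a subspace invariant under the semigroup of multiplications by $e^{-as}$, invoke Beurling--Lax to write it as $\Theta H^2$, deduce $(2)\Leftrightarrow(3)$ from the outerness of $1/s$, and identify $\Theta$ with the Blaschke product over the zeros of $\zeta$ in $\Re(s)>1/2$. The skeleton is sound, and you correctly isolate the crux: showing that $(s-1)\zeta(s)/s^2$ and $s(\alpha-\alpha^s)/(s-1)$ carry no singular inner factor. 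Your argument for that step (membership in $H^2$ via the convexity bound, analytic continuation across $\Re(s)=1/2$ killing any singular measure on the finite part of the line, and the absence of exponential decay as $\Re(s)\to+\infty$ killing an atom at infinity) is the standard one and is correct.

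There is one concrete error in the greatest-common-divisor bookkeeping. The claim that ``no point $1+2\pi ik/\log\alpha$ is shared by two distinct values of $\alpha$'' is false: for $\alpha$ and $\alpha^2$ one has $1+2\pi i(2k)/\log(\alpha^2)=1+2\pi ik/\log\alpha$, so the periodic zero set of $\alpha-\alpha^s$ is entirely contained in that of $\alpha^2-(\alpha^2)^s$. What you actually need is only that the \emph{intersection over all} $\alpha\in(0,1)$ of these zero sets is empty, and this follows by picking two values $\alpha_1,\alpha_2$ with $\log\alpha_1/\log\alpha_2$ irrational (e.g.\ $\alpha_1=1/2$, $\alpha_2=1/3$), since a common zero would force $k_2/k_1=\log\alpha_2/\log\alpha_1$ for nonzero integers $k_1,k_2$. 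With that one-line repair the identification $\Theta=B_\zeta$ goes through, and the rest of your argument is complete.
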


A remarkable strengthening by Báez-Duarte \cite{bd:strenghteningpaper} in 2003 showed it is enough to restrict $\nu$ to the countable index set $\alpha=1/\ell$ for $\ell\in\mathbb{N}$ in condition 2 (whereby the density of $\nu$ in condition $3$  no longer holds). The reader is directed to an article by Bagchi \cite{bagchi:paper} that collects these results of Nyman, Beurling and Báez-Duarte and their proofs in one place. Recently in \cite{noor:paper} these ideas have been transferred to the Hardy-Hilbert Space $ H^2(\mathbb{D}) $. Here, the RH is equivalent to  the constant function $ \mathbf{1} $ being in the closed linear span of certain elements $ \{h_k \mid k \geq 2\} $ in $ H^2(\mathbb{D}) $. In \cite{noor:paper} it is also proved that  
\begin{equation*} 
\sum\limits_{k=2}^{n} \mu(k)(I-S)h_k \to 1-z \ \ \text{ as } n \to \infty
\end{equation*}
in $ H^2(\mathbb{D}) $, where $ S$ is the shift operator and $\mu$ the Möbius function. As a  consequence, this proves the density of ${\mathrm{span}}\{h_k\mid k \geq2\} $ in $H^2(\mathbb{D})$ in the compact-open topology (weaker than the $H^2$-topology) by relating it with invertibility of $ (I-S) $. The goal of this paper is to generalize these ideas to other spaces of analytic functions and establish criteria that would guarantee zero-free regions for the $\zeta$-function. 

The plan of the paper is the following. After a section of preliminaries, our general framework for obtaining zero-free half-planes for $\zeta$ is introduced in Section 2. This approach entails finding topological vector spaces of analytic functions $X$ that satisfy a \emph{checklist} of conditions. This general framework is then applied to the weighted Hardy spaces (unitarily equivalent to weighted $\ell^2$ spaces) in Section 3. After considering some concrete examples in Subsection \ref*{Some examples}, we obtain numerical conditions on the weights that guarantee zero-free regions for $\zeta$ in Subsection 3.2. In Subsection 3.3, we see that for non-trivial zero free half-planes the required weights can exhibit very extreme behavior. In Section 4, we apply this approach to the classical Hardy spaces $H^p$ for $0<p\leq 2$. In Subsection 4.1, we show that the checklist is completely satisfied for $H^p$ with $0<p<1$. In particular we prove that $\mathrm{span}\{h_k\mid k \in k\geq 2\} $ is dense in $H^p$ (see Corollary \ref{prop density}) for $ 0<p <1 $.  Since convergence in $H^p$ implies convergence in the compact-open topology, this strengthens one of the main results of \cite[Theorem 10]{noor:paper}.  In Subsection 4.2 all but one condition is satisfied for $H^p$ with $1\leq p\leq2$. More precisely, we show that 
\[
1 \in \overline{\mathrm{span}}_{H^p}\{h_k\mid k\geq 2\} \implies \zeta(s) \neq 0  \ \  \mathrm{for} \ \   \Re(s)> 1/p 
\]
(see Theorem \ref{Half-plane 1<p<2}) where the $h_k$ are defined in Section 1. This is an $H^p$ analogue of Beurling's results from\cite{beurling:closureproblempaper}. 

Before we begin, it is worthwhile to highlight some results in the literature, with the hope of making evident the continuity of ideas and techniques after relocating the RH and questions surrounding zero-free regions of the $\zeta$-function from the $L^p$ spaces to analytic function spaces. 
Beurling in \cite{beurling:closureproblempaper} proved that for every $1<p<\infty$, the non-vanishing of $\zeta(s)$ for $\Re(s)>1/p$ is equivalent to the density of $\nu$ in $L^p(0,1)$, hence generalizing Theorem \ref{Beurling L2(0,1) equivalence} considerably. Balazard and Saias \cite{Balazard-Saias1} continued the study of the relation between zero-free half-planes and approximation problems in $L^p$ spaces. Bercovici and Foias \cite{Bercovici-Foias} proved that the $L^2(0,1)$-closure of $\nu$ equals
\[
\{f\in L^2(0,1):\frac{\mathcal{F}f(s)}{\zeta(s)} \ \mathrm{is \ holomorphic \ for} \ \Re(s)>1/2\}
\]
where $\mathcal{F}$ denotes the Mellin transform which is an isometric isomorphism of $L^2(0,1)$ onto the Hardy space $H^2(\mathbb{C}_{1/2})$ of the half-plane $\Re(s)>1/2$. This formula may be viewed as an unconditional version of Theorem \ref{Beurling L2(0,1) equivalence}. See \cite{Balazard-SaiasExpo} for interesting discussions around this formula and its possible generalizations to other $L^p(0,1)$ spaces. The transform $\mathcal{F}$ has played an important role in this theory due to the identity
\[
\mathcal{F}\rho_\alpha(s)=\frac{\zeta(s)}{s}(\alpha-\alpha^s) \ \ \ \ \ \  \ (0<\alpha<1, \Re(s)>0).
\]
The starting point of our approach is to introduce functionals $\Lambda^{(s)}$ on spaces of analytic functions on $\mathbb{D}$ which replicate the role of $\mathcal{F}$ (see Section 2). The two families of spaces we apply $\Lambda^{(s)}$ to are the weighted sequence spaces $\ell^2_w$ (Section 3) and the Hardy spaces $H^p$ for $p>0$ (Section 4). In particular we obtain $H^p$ analogues of Beurling's results from\cite{beurling:closureproblempaper}. The Hardy space $H^{1/3}$ was employed by Balazard \cite{Balazard} to prove the formula
\[
\frac{1}{2\pi}\int_{\Re(s)=\frac{1}{2}}\frac{\log|\zeta(s)|}{|s|^2}|ds|=\sum_{\Re(\rho)>1/2}\log\left|\frac{\rho}{1-\rho}\right|
\]
where the sum is taken over the zeros of $\zeta$ (counting multiplicities) to the right of the critical line. In conclusion, spaces of analytic functions and Hardy spaces in particular have played an important role within the Nyman-Beurling approach to the RH and the theory it has inspired. Our goal is to explore further these connections. Balazard's bibliographical survey \cite{BalazardBibliography} contains detailed discussions on numerous works throughout the 20th century regarding completeness problems and a functional approach to the RH.

\section{Preliminaries} \label{sec: background}
	
	\begin{defn} \label{def H^2D}
		The Hardy-Hilbert space $ H^2(\mathbb{D}) $ consists of all holomorphic function on the unit disk $ \mathbb{D}$ that satisfy
		\begin{equation}\label{eqn def H^2D integral version}
		\norm{f} :=\left( \sup_{0<r<1} \int_\T \abs{f(rz)}^2 \dd m(z)\right)^{1/2}<\infty
		\end{equation}
		where $m$ is the normalized Lebesgue measure on $\mathbb{T}$. The space $H^2(\mathbb{D}) $ is a Hilbert space which inherits its inner product from the sequence space $\ell^2$:
		\begin{equation}\label{eqn def H^2D sequence space}
			H^2(\mathbb{D}) = \{f =\sum_{n=0}^{\infty} a_n z^n \mid (a_n)_{n\in\mathbb{N}} \in \ell^2\}.
		\end{equation}
\end{defn} 
The functions $h_k$ in $H^2(\mathbb{D}) $ were defined in \cite{noor:paper} as
	\[
	h_k(z) := \frac{1}{k} \frac{1}{1-z} \log \left( \frac{1+z+\ldots+z^{k-1}}{k} \right) \ , \ \ k\geq 2.
	\]
	Actually this definition of $h_k$ differs from that of $\cite{noor:paper}$ by the factor $1/k$. The $H^2({\mathbb{D}})$ version of Báez-Duarte's result in \cite{noor:paper} plays a central role in this work.
	\begin{thm}\label{BD H^2}
		The following statements are equivalent:
		\begin{enumerate}
			\item RH holds true,
			\item $\mathbf{1} $ belongs to the closure of ${\mathrm{span}}\{h_k\mid k \geq2\} $ in $H^2(\mathbb{D})$, and 
			\item ${\mathrm{span}}\{h_k\mid k \geq2\} $ is dense in $H^2(\mathbb{D})$.
		\end{enumerate}
		where $\mathbf{1}  $ is the constant function. 
	\end{thm}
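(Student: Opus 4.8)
The plan is to reduce Theorem \ref{BD H^2} to the classical Báez-Duarte strengthening of the Nyman–Beurling criterion by constructing a continuous linear map that carries the relevant approximation problem in $L^2(0,1)$ over to $H^2(\mathbb{D})$. First I would recall the Báez-Duarte result in the form: RH holds if and only if $\mathbbm{1}_{(0,1)}$ lies in the $L^2(0,1)$-closure of $\mathrm{span}\{\rho_{1/\ell}\mid \ell\geq 2\}$, where $\rho_{1/\ell}(x)=\rho(1/(\ell x))-\tfrac1\ell\rho(1/x)$ (the term $\ell=1$ being trivial). The key computational step is to identify the image of $\rho_{1/k}$ under a suitable unitary (or at least bounded, boundedly invertible) identification between $L^2(0,1)$ and $H^2(\mathbb{D})$, and to check that it is a scalar multiple of $h_k$, while $\mathbbm{1}_{(0,1)}$ corresponds (up to a nonzero scalar) to $\mathbf{1}$. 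The natural candidate is the substitution $x\mapsto$ some conformal-type change of variable composed with the standard isometry $\ell^2\cong H^2(\mathbb{D})$; concretely, I expect that expanding $\rho_{1/k}$ in an appropriate orthonormal-type basis of $L^2(0,1)$ yields Fourier coefficients matching those in the power series of $h_k$, which explains the $\log$-and-$\frac{1}{1-z}$ shape of $h_k$ (the $\frac{1}{1-z}$ coming from a summation/antiderivative, the $\log$ from the partial-sum polynomial $1+z+\cdots+z^{k-1}$ inside).

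The steps, in order, would be: (1) state precisely the Báez-Duarte form of the Nyman–Beurling criterion in $L^2(0,1)$ that we take as input; (2) define the linear map $T:L^2(0,1)\to H^2(\mathbb{D})$ (or quote it from \cite{noor:paper}) and verify it is a topological isomorphism — boundedness with bounded inverse suffices, since we only need it to preserve closures of spans and membership therein; (3) compute $T\rho_{1/k}$ and show $T\rho_{1/k}=c_k h_k$ for some nonzero constant $c_k$, and compute $T\mathbbm{1}_{(0,1)}=c\,\mathbf{1}$ with $c\neq 0$; (4) conclude the equivalence (1) $\Leftrightarrow$ (2) by transporting the Báez-Duarte criterion through $T$; (5) prove (2) $\Leftrightarrow$ (3): the implication (3) $\Rightarrow$ (2) is trivial, and for (2) $\Rightarrow$ (3) one uses that $\mathbf{1}$ is cyclic for the shift $S$ on $H^2(\mathbb{D})$ (its orbit $\{S^n\mathbf 1\}=\{z^n\}$ spans a dense subspace), together with the fact that the closed span of $\{h_k\}$ is $S$-invariant — or, more directly, that once $\mathbf 1$ is in the closed span one can generate all of $H^2(\mathbb{D})$; this is exactly the kind of argument already present in \cite{noor:paper}, so I would cite it and adapt.

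The main obstacle I anticipate is step (3): the exact bookkeeping of constants and the verification that the Mellin/series computation really does produce the function $h_k$ in the normalized form stated here (recall the remark that this paper's $h_k$ differs from \cite{noor:paper} by a factor $1/k$), so the scalars $c_k$ must be tracked carefully to be sure they are nonzero and that no $k$-dependent blow-up spoils the transfer of the closed span. A secondary subtlety is that $T$ need not be unitary, so I must be careful that "closure of span" is genuinely preserved — this is automatic for any topological isomorphism, but it is worth stating explicitly since the Báez-Duarte phenomenon is sensitive to the topology (density in condition (3) is a strictly stronger statement than condition (2) in the $L^2(0,1)$ setting, yet becomes equivalent here precisely because of the shift-cyclicity of $\mathbf 1$). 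Everything else — the definition of $H^2(\mathbb{D})$, basic properties of the shift, the Báez-Duarte theorem — is standard or quoted, so the proof should be short modulo the one computation.
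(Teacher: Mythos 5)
The paper does not prove this theorem; it is quoted from \cite{noor:paper}, and the surrounding material (the chain of isometries in Figure 1 and the argument of Corollary \ref{prop density}) shows that the intended proof is exactly the transfer-plus-cyclicity argument you outline. So your overall route is the right one. Two points need correction, though. First, there is no global topological isomorphism $L^2(0,1)\to H^2(\mathbb{D})$ doing what you want: the identification runs only through the closed subspace $\mathcal{M}\subset L^2(0,1)$ of functions a.e.\ constant on the intervals $[\tfrac{1}{n+1},\tfrac{1}{n})$, via the unitaries $U$, $\Psi$, $T$ of Figure 1. The argument still works because every $\rho_{1/k}$ ($k\geq 2$) and $\mathbbm{1}_{(0,1)}$ lie in $\mathcal{M}$, so the $L^2(0,1)$-closure of their span coincides with the closure computed inside $\mathcal{M}$, and a unitary of $\mathcal{M}$ onto $H^2(\mathbb{D})$ then transports the B\'aez-Duarte criterion \cite{bd:strenghteningpaper} to give $(1)\Leftrightarrow(2)$; but you must say this, since it is the reason the restriction to $\mathcal{M}$ is harmless. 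Second, your primary mechanism for $(2)\Rightarrow(3)$ is not right as stated: the closed span of $\{h_k\}$ is not known to be invariant under the shift $S$. The correct semigroup is $W_nf(z)=(1+z+\cdots+z^{n-1})f(z^n)$, which is bounded on $H^2(\mathbb{D})$ and satisfies $W_nh_k=n\,h_{nk}-\tfrac{n}{k}h_n$, so it does leave $\overline{\mathrm{span}}\{h_k\mid k\geq2\}$ invariant; since $W_n\mathbf{1}=1+z+\cdots+z^{n-1}$, the vector $\mathbf{1}$ is cyclic for $(W_n)$ and density follows. Your fallback of citing \cite{noor:paper} covers this, but the $S$-invariance claim itself should be removed. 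With these two repairs the proposal is correct and is essentially the proof the paper relies on.
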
 
		Let $S=M_z$ denote the shift operator on $ H^2(\mathbb{D}) $ of multiplication by $z$. Then in \cite[Lemma 11]{noor:paper} it is also proved that
		\begin{equation}\label{eq Noor convergence of (I-S)hk}
			\Big\| \sum\limits_{k=2}^{n} \mu(k)(I-S)h_k - (1-z) \Big\|_{H^2(\mathbb{D})} \longrightarrow  0 \qquad \text{ as } n \to \infty
		\end{equation}
	and as a consequence it was established that
		\begin{equation}\label{Closure problem of (I-S)hk}
			\overline{\mathrm{span}}_{H^2(\mathbb{D})}\{(I-S)h_k \mid k \geq 2\} = H^2(\mathbb{D}).
		\end{equation}
It is important to note that $I-S$ is not an invertible operator on $ H^2(\mathbb{D})$ (but has dense  range). If it were invertible, then $ (I-S)^{-1} $ applied to \eqref{eq Noor convergence of (I-S)hk} or \eqref{Closure problem of (I-S)hk} would prove the RH by Theorem \ref{BD H^2}. It is worth remarking that a result similar to \eqref{eq Noor convergence of (I-S)hk} appears in \cite[Section 13]{Balazard-Saias4}, but in the latter article the shift operator $S$ is defined on the weighted $\ell^2$ space with weights $(1/(\zeta(2)k^2))_k$ which is equivalent to multiplication by $z$ on a weighted Bergman space $\mathcal{A}$ (defined below). The shifts on $ H^2(\mathbb{D})$ and $\mathcal{A}$ are not equivalent, and hence \eqref{eq Noor convergence of (I-S)hk} is not an immediate consequence of results in \cite{Balazard-Saias4}. In fact, unlike the shift on $ H^2(\mathbb{D})$, even basic questions such as a characterization of the closed invariant subspaces of the Bergman shifts remain open problems (see \cite{TheoryBergmanspaces}).
		
		It is useful to locate the $H^2(\mathbb{D})$ version of Báez-Duarte's theorem among the cornucopia of spaces where it appears in the literature (see Figure 1). The space $\mathcal{A}$ is the Hilbert space of analytic functions $f(z)=\sum_{n=0}^\infty a_nz^n$ and $g(z)=\sum_{n=0}^\infty b_nz^n$ defined on $\mathbb{D}$ for which the inner product is given by
		\[
		\langle f,g\rangle:=\sum_{n=0}^\infty \frac{a_n\overline{b_n}}{(n+1)(n+2)}.
		\]
Then the maps $T:H^2(\mathbb{D})\to\mathcal{A}$ (defined in Figure 1) and $\Psi:\ell^2_{\omega}\to \mathcal{A}$ 
\[
\Psi:(x(1),x(2),\ldots)\longmapsto \sum_{n=0}^\infty x(n+1)z^n
	\]
are isometric isomorphisms, where $\ell^2_\omega$ is the weighted $\ell^2$-space with weights $\omega_n=\frac{1}{(n+1)(n+2)}$ corresponding to the coefficients of functions in $\mathcal{A}$. See \cite{noor:paper} for more details on $T$ and $\Psi$. Let $\mathcal{M}$ be the closed subspace of  $L^2(0,1)$ consisting of functions almost everywhere constant on the intervals $[\frac{1}{n+1},\frac{1}{n})$ for $n\geq 1$, and $H^2(\mathbb{C}_\frac{1}{2})$ the Hardy space of analytic functions $F$ on the half-plane $\mathbb{C}_\frac{1}{2}:=\{s\in\mathbb{C}: \Re(s)>1/2\}$ such that
\[
||F||^2:=\sup_{\sigma>\frac{1}{2}}\frac{1}{2\pi}\int_{-\infty}^\infty|F(\sigma+it)|^2\dd t <\infty.
\]
The maps $U:\mathcal{M}\to\mathcal{H}$ and the Mellin transform $\mathcal{F}:L^2(0,1)\to H^2(\mathbb{C}_\frac{1}{2})$ are also isometric isomorphisms. See \cite{bagchi:paper} for more details on $U$ and $\mathcal{F}$.
	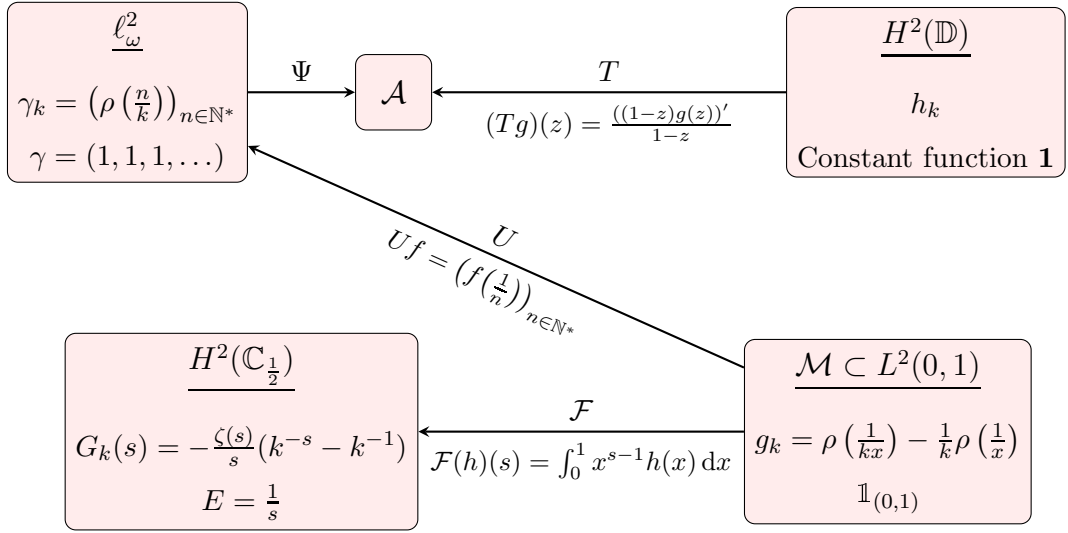
\begin{figure}[!h] 
	\begin{tikzpicture}[node distance=8.5cm]
	\node (H2O) [rectwhite, align=center] {\underline{$ H^2(\mathbb{C}_\frac{1}{2}) $} \\[1em] \small $ G_k(s) = - \frac{\zeta(s)}{s} (k^{-s} - k^{-1})$\\ [0.5em]
		\small $E = \frac{1}{s} $};
	
	\node (M) [rectwhite, align=center, right of=H2O, xshift = 0 cm]{\underline{$ \mathcal{M} \subset L^2(0,1) $} \\[1em] \small $ g_k = \rho\left(\frac{1}{kx}\right) - \frac{1}{k}\rho\left(\frac{1}{x}\right) $ \\[0.5em] \small $ \mathbbm{1}_{(0,1)} $ };
	
	\node (H) [rectwhite, align=center, above of = H2O, yshift = -4 cm, xshift = -1.5 cm]{\underline{$\ell^2_\omega$} \\ [1em] \small $ \gamma_k  = \left(\rho\left(\frac{n}{k}\right)\right)_{n\in\mathbb{N}^*} $ \\[0.5em] \small $ \gamma = (1,1,1,\ldots) $};
	
	\node (A) [rectwhite, align=center, right of=H, xshift = -5 cm]{$\mathcal{A}$};
	
	\node (D) [rectwhite, align=center, right of=A, xshift = -1.5 cm]{\underline{$ H^2(\mathbb{D}) $} \\[1em] \small $ h_k $ \\[0.5em] \small Constant function  $ \mathbf{1} $};
	
	\draw [arrow] (M) -- node[anchor=north]{\footnotesize $\mathcal{F}(h)(s)=\int_0^1x^{s-1}h(x)\dd x$} node[anchor=south]{\small$\mathcal{F}$} (H2O);
	
	\draw [arrow] (M)-- node[sloped, anchor=center, below]{\footnotesize $Uf = \left(f{\left( \frac{1}{n}\right)} \right)_{n\in\mathbb{N}^*}$}node[sloped, anchor=center, above]{\small $U$}(H);
	
	\draw [arrow] (D) -- node[anchor=north]{\footnotesize $ (Tg)(z) = \frac{\left((1-z)g(z)\right)^\prime}{1-z} $} node[anchor=south]{\small$T$} (A);
	\draw [arrow] (H) -- node[anchor=south]{\small$\Psi$} (A);
 ;
	\end{tikzpicture} 
	\caption{Spaces and isometries between them.}
	\label{fig:diagram of maps}
\end{figure} 
 
\section{General framework} 
\label{sec: general spaces}
In this section we outline our approach to finding zero free half-planes for $\zeta$ using more general spaces of analytic functions, but that contain $H^2(\mathbb{D})$. We begin by looking more carefully at the situation in $H^2(\mathbb{D})$.  

From Figure 1, we see that there is a chain of isometries:
\begin{equation}\label{eqn Chain of Isometries}
	H^2(\mathbb{D}) \xrightarrow[\cong]{T} \mathcal{A} \xrightarrow[\cong]{\Psi^{-1}}\ell^2_\omega \xrightarrow[\cong]{U^{-1}} \mathcal{M} \xrightarrow[\text{not onto}]{\mathcal{F}}H^2(\mathbb{C}_\frac{1}{2}). 
\end{equation}
We denote by $\Lambda \colon H^2(\mathbb{D}) \to H^2(\mathbb{C}_\frac{1}{2})$ the composition of these isometries.
\begin{comment} %Redacted because it is irrelevant
	As we had seen in Section~\ref{Noor Invertibility}, $  H^2(\mathbb{D}) $ has an orthonormal (Schauder) basis $ p_n(z) = \frac{1}{\sqrt{n(n+1)}} \left(n z^n - \sum_{k=0}^{n-1} z^k \right) $ where $ n \in \mathbb{N^*} $. Hence we can say:
	\[ H^2(\mathbb{D}) = \{\sum_{n=1}^{\infty} b_n p_n(z) \mid (b_n) \in \ell^2\}\]
	It is easy to check that $\Lambda(p_n) \in H^2(\Omega)$ is of the form $ e_n(s) := \frac{1}{s}\sqrt{n(n+1)}(n^{-s} - (n+1)^{-s}) $. Thus the map $ \Lambda $ maps $ H^2(\mathbb{D}) $ isometrically onto the closed subspace $ Y \subseteq H^2(\Omega) $, where:
	\[ Y = \{\sum_{n=1}^{\infty} b_n e_n \mid (b_n) \in \ell^2\}\]
	It can also be checked by direct calculation that $f_k := \Lambda(z^k) = \frac{1}{s} \left( k^{1-s} - (k+1)^{1-s}\right)$ for $ k \in \mathbb{N^*} $ and $f_0 := \Lambda(1) = - \frac{1}{s}$. Since $ \{z^k\mid k\in \mathbb{N}\} $ is also an orthonormal basis of $ H^2(\mathbb{D}) $, we also have another formulation of $ Y $ as below \footnote{Note that $ (b_n) \in \ell^2(\mathbb{N^*})$, whereas $ (a_n) \in \ell^2(\mathbb{N})$. We have supressed the notation for the sake of brevity.}:
	\begin{align}
		\label{def H2 Disk sequence}H^2(\mathbb{D}) &= \{\sum_{n=0}^{\infty} a_n z^n \mid (a_n) \in \ell^2\} \\
		Y &= \{\sum_{n=0}^{\infty} a_n f_n \mid (a_n) \in \ell^2\}
	\end{align}
\end{comment}
Since $ H^2(\mathbb{C}_\frac{1}{2}) $ is a reproducing kernel Hilbert space, the evaluation functionals $E_{s} \colon H^2(\mathbb{C}_\frac{1}{2})\to \mathbb{C}$ for each $s \in \mathbb{C}_\frac{1}{2}$ are bounded. So if we define the functionals $\Lambda^{(s)}:=E_s\circ \Lambda: H^2(\mathbb{D})\to \mathbb{C}$, then we get
\begin{lem} \label{lem half plane on H^2(D)}
	$\Lambda^{(s)}$ is bounded on $H^2(\mathbb{D})$ for $\Re(s)>1/2 $.
\end{lem}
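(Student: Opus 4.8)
The plan is to recognize $\Lambda^{(s)}$ as a composition of two maps that are each bounded, so that boundedness is automatic. Recall from \eqref{eqn Chain of Isometries} that $\Lambda=\mathcal{F}\circ U^{-1}\circ\Psi^{-1}\circ T$. The three maps $T\colon H^2(\mathbb{D})\to\mathcal{A}$, $\Psi^{-1}\colon\mathcal{A}\to\ell^2_\omega$ and $U^{-1}\colon\ell^2_\omega\to\mathcal{M}$ are isometric isomorphisms, and the Mellin transform $\mathcal{F}\colon L^2(0,1)\to H^2(\mathbb{C}_\frac{1}{2})$ is an isometric isomorphism whose restriction to the closed subspace $\mathcal{M}$ is therefore an isometry into $H^2(\mathbb{C}_\frac{1}{2})$. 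The first step is to record the consequence that $\Lambda$ is an isometry from $H^2(\mathbb{D})$ into $H^2(\mathbb{C}_\frac{1}{2})$; in particular $\|\Lambda f\|_{H^2(\mathbb{C}_\frac{1}{2})}=\|f\|_{H^2(\mathbb{D})}$ for every $f$, so $\Lambda$ is bounded with $\|\Lambda\|=1$.

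The second step is to invoke the reproducing kernel structure of $H^2(\mathbb{C}_\frac{1}{2})$. It is classical --- for instance through the Paley--Wiener identification of the Hardy space of a half-plane with the Laplace transforms of functions in $L^2(0,\infty)$ --- that for each $s$ with $\Re(s)>1/2$ the evaluation functional $E_s$ is represented by a kernel $k_s\in H^2(\mathbb{C}_\frac{1}{2})$, namely $k_s(w)=(w+\overline{s}-1)^{-1}$, with $\|E_s\|=\|k_s\|_{H^2(\mathbb{C}_\frac{1}{2})}=(2\Re(s)-1)^{-1/2}<\infty$. Thus $E_s$ is bounded precisely on the half-plane $\Re(s)>1/2$.

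Combining the two steps, $\Lambda^{(s)}=E_s\circ\Lambda$ is the composition of a bounded functional with a bounded (indeed isometric) operator, hence bounded on $H^2(\mathbb{D})$, with the explicit estimate $\|\Lambda^{(s)}\|\le\|E_s\|\,\|\Lambda\|=(2\Re(s)-1)^{-1/2}$. I expect no real obstacle here: the substance was already packaged into the construction of $\Lambda$ in Figure~\ref{fig:diagram of maps} and \eqref{eqn Chain of Isometries}. The only points that warrant a word are (i) that $\mathcal{F}$ remains an isometry when restricted to $\mathcal{M}$ rather than to all of $L^2(0,1)$, which is immediate since $\mathcal{M}$ is a closed subspace, and (ii) recalling the reproducing kernel of $H^2(\mathbb{C}_\frac{1}{2})$ together with its norm. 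It is also worth observing that the bound $(2\Re(s)-1)^{-1/2}$ blows up as $\Re(s)\downarrow 1/2$, consistent with $\mathcal{F}$ failing to be onto and with the critical line being the natural boundary of this construction.
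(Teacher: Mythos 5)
Your proof is correct and follows essentially the same route as the paper, which presents the lemma as an immediate consequence of $\Lambda$ being a composition of isometries and $E_s$ being bounded on the reproducing kernel Hilbert space $H^2(\mathbb{C}_\frac{1}{2})$ for $\Re(s)>1/2$. Your added explicit kernel $k_s(w)=(w+\overline{s}-1)^{-1}$ and the norm bound $(2\Re(s)-1)^{-1/2}$ are accurate but not needed by the paper, which leaves the argument at the level of ``evaluation functionals on an RKHS are bounded.''
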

The principal feature of these  functionals is the property that
\begin{equation}\label{Principal feature of functionals}
\Lambda^{(s)}(h_k)=G_k(s) = - \frac{\zeta(s)}{s} (k^{-s} - k^{-1})
\end{equation}
which can be seen from Figure 1.
By \eqref{eqn Chain of Isometries} one can also check that
\begin{equation}\label{def functionals on H2D }
\Lambda^{(s)}(1)=-\frac{1}{s}, \\ \ \ \Lambda^{(s)}(z^k) = f_k(s): = - \frac{1}{s} \left( (k+1)^{1-s} - k^{1-s}\right).
\end{equation}
Indeed, since $T(z^k)=kz^{k-1}-\frac{z^k}{1-z}$ we have the sequence
\[
s_k:=\Psi^{-1}T(z^k)=(0,\ldots,0,k,-1,-1,\ldots)\in\mathcal{H}.
\]
where the $k$-th term of $s_k$ is $k$. Now $p_k:=U^{-1}s_k$ belongs to $\mathcal{M}$ such that $p_k(x)=s_k(n)$ for all $x\in [\frac{1}{n+1},\frac{1}{n})$ and its Mellin transform is 
\[
\mathcal{F}(p_k)(s)=\int_\frac{1}{k+1}^\frac{1}{k}k x^{s-1}\dd x-\int^\frac{1}{k+1}_0 x^{s-1}\dd x=f_k(s).
\]
Therefore $\Lambda^{(s)}(z^k) = f_k(s)$ and $\Lambda^{(s)}$ is uniquely determined by these values.
\begin{comment}
	\begin{rem}
		Note that since $\{ z^k, \; k \in \mathbb{N}\} $ form an orthogonal (Schauder) basis in $ H^2(\mathbb{D}) $, any map defined on these elements has at most one continuous linear extension to the whole space $ H^2(\mathbb{D}) $. It is slightly incorrect to define a map on $ H^2(\mathbb{D}) $ by just choosing where the (Schauder) basis elements map to. However we shall continue to do so later as well, since we ultimately wish to find values of $ s $ in general Hilbert spaces $ X $ for which these evaluation maps extend to continuous functionals.
	\end{rem}
\end{comment}

The following growth estimate for $f_k(s)$ will be  used frequently.

\begin{prop} \label{F_k estimate}
	For each $\Re(s)>0$, we have $|f_k(s)|\asymp k^{-\Re(s)}$ for all $k\geq 1$.
\end{prop}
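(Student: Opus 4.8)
The plan is to rewrite $f_k(s)$ as a slowly oscillating integral and extract both bounds from it. Set $\sigma:=\Re(s)>0$. Since the derivative of $x^{1-s}$ is $(1-s)x^{-s}$, the fundamental theorem of calculus gives $(k+1)^{1-s}-k^{1-s}=(1-s)\int_k^{k+1}x^{-s}\dd x$, and the substitution $x=k+v$ then yields
\[
f_k(s)=\frac{s-1}{s}\int_k^{k+1}x^{-s}\dd x=\frac{s-1}{s}\,k^{-s}\int_0^1\Bigl(1+\tfrac{v}{k}\Bigr)^{-s}\dd v .
\]
The upper bound is then immediate: since $\sigma>0$ we have $\bigl|(1+v/k)^{-s}\bigr|=(1+v/k)^{-\sigma}\le1$ for every $v\in[0,1]$ and $k\ge1$, whence $\abs{f_k(s)}\le\frac{\abs{s-1}}{\abs{s}}\,k^{-\sigma}$ for all $k$.

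For the lower bound write $I_k(s):=\int_0^1(1+v/k)^{-s}\dd v$; the substantive point is the absence of cancellation in this oscillating integral. The key observation is that over $[0,1]$ the argument of $(1+v/k)^{-s}=\exp\bigl(-s\log(1+v/k)\bigr)$ moves by only $O(1/k)$: since $\bigl|s\log(1+v/k)\bigr|\le\abs{s}\log(1+1/k)\to0$ uniformly in $v$, the integrand converges uniformly to $1$, hence $I_k(s)\to1$, and there is $k_0=k_0(s)$ with $\abs{I_k(s)}\ge\tfrac12$ for all $k\ge k_0$. This yields $\abs{f_k(s)}\ge\frac{\abs{s-1}}{2\abs{s}}\,k^{-\sigma}$ for $k\ge k_0$. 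For the finitely many remaining indices $1\le k<k_0$ I would simply use that $f_k(s)\neq0$ and set $c'(s):=\min_{1\le k<k_0}\abs{f_k(s)}\,k^{\sigma}>0$, so that $\abs{f_k(s)}\ge c'(s)\,k^{-\sigma}$ there as well. Taking the smaller of $c'(s)$ and $\frac{\abs{s-1}}{2\abs{s}}$ completes the two-sided estimate, with constants depending only on $s$.

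Thus the only remaining technicality is whether $f_k(s)$ can vanish for one of the finitely many small $k$. Now $f_k(s)=0$ is equivalent to $(1+1/k)^{1-s}=1$, i.e. to $(1-s)\log(1+1/k)\in2\pi i\mathbb{Z}$; comparing real parts forces $\Re(s)=1$, and the term $n=0$ forces $s=1$. Hence for every $s$ with $\Re(s)>0$ and $s\neq1$ that avoids the discrete exceptional set $\{\,1+it:\ t\log(1+1/k)\in2\pi\mathbb{Z}\setminus\{0\}\ \text{for some }k\ge1\,\}\subset\{\Re(s)=1\}$, all $f_k(s)$ are nonzero and the argument above goes through verbatim; at $s=1$ itself $f_k\equiv0$, so the statement is to be read with $s\neq1$. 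As a by-product the same expansion gives the sharper asymptotics $\abs{f_k(s)}=\frac{\abs{s-1}}{\abs{s}}\,k^{-\sigma}\bigl(1+O(1/k)\bigr)$ as $k\to\infty$.
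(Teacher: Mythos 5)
Your proof is correct and follows the same basic route as the paper's --- the fundamental theorem of calculus applied to $x^{1-s}$ --- but it is considerably more careful, and the extra care exposes a real (if minor) flaw in the statement. The paper's proof is one line: it writes $(k+1)^{1-s}-k^{1-s}=(1-s)\int_k^{k+1}y^{-s}\,\mathrm{d}y$ and asserts that the modulus of this integral is $\asymp k^{-\Re(s)}$; the upper bound is trivial, but the lower bound requires ruling out cancellation in the oscillatory integrand, which the paper does not address and your normalization $I_k(s)=\int_0^1(1+v/k)^{-s}\,\mathrm{d}v\to 1$ handles cleanly. More importantly, you are right that the proposition as literally stated is false: at $s=1$ one has $f_k(s)=0$ for every $k$ (the factor $|1-s|$ in the paper's own display vanishes, so the claimed conclusion does not ``easily follow'' there), and at a discrete set of points on the line $\Re(s)=1$ a finite number of individual $f_k$ can vanish, so the two-sided bound must either exclude these $s$ or be read as holding for all sufficiently large $k$. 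Your identification of the exceptional set via $(1-s)\log(1+1/k)\in 2\pi i\mathbb{Z}$ is correct. None of this damages the paper's applications --- boundedness of $\Lambda^{(s)}$ uses only the upper bound, and the arguments needing the lower bound (Example~\ref{example small disk}, Proposition~\ref{lem Condition B}) only require it for all large $k$, where your $k\ge k_0$ estimate applies --- but the statement should really carry the caveat you give, and your sharper asymptotic $|f_k(s)|=\frac{|s-1|}{|s|}k^{-\Re(s)}(1+O(1/k))$ is a nice bonus.
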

\begin{proof} By the Fundamental Theorem of Calculus we have
\begin{equation}\label{eqn 4b }
|(k+1)^{1-s} - k^{1-s}| =|1-s| \left\lvert \int_k^{k+1} y^{-s}\dd y \right\rvert \asymp|1-s|k^{-\Re(s)}
\end{equation}
and the growth estimate easily follows.
\end{proof}
The following \emph{checklist} of conditions summarizes our abstract approach. Denote by $\mathbb{C}_r$ the half-plane $\{s \in \mathbb{C} \mid \Re(s) > r\}$.

\phantomsection \label{Checklist}
Suppose a topological vector space $ X $ of analytic functions satisfies the following conditions: 

\noindent \framebox{ 
	\begin{minipage}{14 cm} 
		
		\begin{enumerate}
			\item[\textbf{(C1)}] $ z^k \in X $ for $ k \in \mathbb{N} $ form a Schauder basis of $ X $.
			\item[\textbf{(C2)}] $  H^2(\D) \subseteq X$ with the relative topology weaker than that of $H^2(\D)$.
			\item[\textbf{(C3)}] $ 1 $ belongs to the closure of ${\mathrm{span}}\{h_k\mid k \geq2\} $ in $X$.
			\item[\textbf{(C4)}]  There exists $ r \in \mathbb{R}$ such that the functionals $\Lambda^{(s)}:X\to \mathbb{C}$ defined by
\begin{align*} 
				z^k &\mapsto f_k(s)=- \frac{1}{s} \left( (k+1)^{1-s} - k^{1-s}\right) && (k \in \mathbb{N^*}) \\
				1 &\mapsto -\frac{1}{s}
\end{align*}
are bounded on $X$ for all $s\in\mathbb{C}_r$. 
		\end{enumerate}
	\end{minipage}
}\\  

 The following result provides the justification for our general approach.
\begin{prop}\label{FP} If there exists a space of  analytic functions $ X $ satisfying the conditions above for some $r\in\mathbb{R}$, then $\zeta(s)\neq 0$ for all $s\in\mathbb{C}_r$.
	\end{prop}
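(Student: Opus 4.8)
The argument is the abstract Nyman--Beurling mechanism, and I would begin with a preliminary observation that streamlines it: conditions (C2) and (C4) already force $r\geq 1/2$. Indeed, fix $s\in\mathbb{C}_r$; by (C4) the functional $\Lambda^{(s)}$ is continuous on $X$, and by (C2) its restriction to $H^2(\mathbb{D})$ is continuous for the subspace topology inherited from $X$, which is weaker than the norm topology of $H^2(\mathbb{D})$ --- so a fortiori $\Lambda^{(s)}$ is a bounded functional on $H^2(\mathbb{D})$. Its Riesz representative $g$ then satisfies $\widehat g(k)=\overline{f_k(s)}$, forcing $\sum_k|f_k(s)|^2<\infty$, which a direct estimate of $f_k$ (cf.\ Proposition~\ref{F_k estimate}) shows to hold only when $\Re(s)>1/2$. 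Hence $\mathbb{C}_r\subseteq\mathbb{C}_{1/2}$, and on this half-plane the identity $\Lambda^{(s)}(h_k)=G_k(s)$ of \eqref{Principal feature of functionals} is fully justified by the chain of isometries \eqref{eqn Chain of Isometries} and Figure~1, where $H^2(\mathbb{C}_\frac{1}{2})$ and its bounded point evaluations live.

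With that in hand, the proof is short. Fix $s\in\mathbb{C}_r$ and use (C3) to choose finite linear combinations $g_n=\sum_{k\geq 2}c_{n,k}h_k$ with $g_n\to\mathbf 1$ in $X$. Continuity of $\Lambda^{(s)}$ gives $\Lambda^{(s)}(g_n)\to\Lambda^{(s)}(\mathbf 1)=-1/s$, while linearity together with \eqref{Principal feature of functionals} gives
\[
\Lambda^{(s)}(g_n)=\sum_{k\geq 2}c_{n,k}\,G_k(s)=-\frac{\zeta(s)}{s}\sum_{k\geq 2}c_{n,k}\bigl(k^{-s}-k^{-1}\bigr)
\]
for every $n$. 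If now $\zeta(s_0)=0$ for some $s_0\in\mathbb{C}_r$, then $s_0\neq 0$ (since $\zeta(0)=-\tfrac12$) and the right-hand side vanishes for every $n$, so $\Lambda^{(s_0)}(g_n)=0$ for all $n$, contradicting $\Lambda^{(s_0)}(g_n)\to-1/s_0\neq 0$. Therefore $\zeta$ has no zero in $\mathbb{C}_r$.

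So I do not expect a genuine obstacle inside Proposition~\ref{FP}: it is the bookkeeping lemma that validates the checklist, and the only part requiring any thought is the preliminary step above --- either the remark that (C4) confines $s$ to $\mathbb{C}_\frac{1}{2}$, or, if one prefers to run the argument for arbitrary $r$ without it, the extension of \eqref{Principal feature of functionals} off $\mathbb{C}_\frac{1}{2}$ by analytic continuation (using that $h_k=\sum_j a_j^{(k)}z^j$ converges in $X$ by (C1)--(C2), so $\Lambda^{(s)}(h_k)=\sum_j a_j^{(k)}f_j(s)$, a series holomorphic in $s$ as far left as the $O(1/j)$ decay of the Taylor coefficients of $h_k$ --- coming from its logarithmic boundary singularities --- outpaces the growth $|f_j(s)|\asymp j^{-\Re(s)}$ of Proposition~\ref{F_k estimate}, and then identifying it with $G_k(s)$ by the identity theorem). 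The real difficulty of the paper lies not here but in \emph{verifying} (C1)--(C4) for concrete spaces $X$, where the hard conditions are the approximation statement (C3) and the determination of the sharp $r$ in (C4).
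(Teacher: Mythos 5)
Your proof is correct and follows essentially the same route as the paper's: use (C3) and the continuity of $\Lambda^{(s)}$ from (C4) to approximate $\Lambda^{(s)}(\mathbf{1})=-1/s$ by linear combinations of $\Lambda^{(s)}(h_k)=-\frac{\zeta(s)}{s}(k^{-s}-k^{-1})$, and conclude that $\zeta(s)\neq 0$ since $-1/s$ never vanishes on $\mathbb{C}_r$. Your preliminary observation that (C2) and (C4) force $r\geq 1/2$ (via the Riesz representative of $\Lambda^{(s)}|_{H^2(\mathbb{D})}$ and the estimate $|f_k(s)|\asymp k^{-\Re(s)}$) is a worthwhile extra remark that the paper leaves implicit when invoking \eqref{Principal feature of functionals}, but it does not change the argument.
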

 \begin{proof}
     By \textbf{(C1)} and \textbf{(C2)} it is clear that $\Lambda^{(s)}$ is determined by it values on $H^2(\mathbb{D})$. Therefore  by \textbf{(C3)}, \textbf{(C4)} and \eqref{Principal feature of functionals} it follows that $\Lambda^{(s)}(1)=-1/s$ can be approximated pointwise by linear combinations of 
\[
\Lambda^{(s)}(h_k)=- \frac{\zeta(s)}{s} (k^{-s} - k^{-1})
\]
for all $s\in\mathbb{C}_r$. Since $1/s$ has no zeros for $s\in\mathbb{C}_r$, the same must be true for $\Lambda^{(s)}(h_k)$ and hence for $\zeta(s)$.
 \end{proof}
  In the remainder of this article we apply this approach to the weighted sequence spaces $\ell^2_w$ and the Hardy spaces $H^p$. In particular, we investigate the extent to which these spaces satisfy conditions \textbf{(C1)} to \textbf{(C4)} . It will become evident in the following sections that condition \textbf{(C3)} poses the main challenge here. For instance Example \ref{example logn} and Theorem \ref{Half-plane 1<p<2} show that \textbf{(C3)} would imply $\zeta(s)\neq 0$ for $\mathbb{C}_r$ with $1/2 < r < 1$. An alternative route to proving \textbf{(C3)} is to show that the operator $ I-S$ is invertible on $ X $. This is because the approximation \eqref{eq Noor convergence of (I-S)hk} holds in X by \textbf{(C2)}, that is 
	 \begin{equation}\label{eq Noor eqn in general X}
	 	\Big\| \sum\limits_{k=2}^{n} \mu(k)(I-S)h_k - (1-z) \Big\|_{X} \longrightarrow  0 \qquad \text{ as } n \to \infty.
	 \end{equation} 
 	We therefore call the invertibility of $I-S$ on $X$ the \textbf{Easy (C3)} condition. 
  
  We want to be clear that no new zero-free half-planes for $\zeta$ are obtained in this work. But rather, we hope it will lead to a deeper understanding of the challenges posed en route to such results, when viewed through the lens of spaces of analytic functions. It is reasonable to ask why $H^2(\mathbb{D})$ was chosen over $H^2(\mathbb{C}_\frac{1}{2})$ (see \cite{bagchi:paper}) or $\mathcal{A}$ (equivalently $\ell^2_w$ as in \cite{Balazard-Saias4}) to formulate our approach. Compared to the latter two spaces, the theory of $H^2(\mathbb{D})$ is the most complete of any reproducing kernel Hilbert space, with a vast array of tools and techniques developed over many decades. The article  \cite{noor:paper} contains some applications of these tools to the Nyman-Beurling approach to the RH.

\section{Weighted $ \ell^2 $ sequence spaces} 
\label{In Sequence Spaces}
In this section our goal  is to apply the fundamental principle of Section 2 to spaces $X$ of analytic functions with Taylor series coefficients in some weighted $ \ell^2 $ space. We begin with some illustrative examples.
\subsection{Some examples}\label{Some examples}
\begin{example}[Smaller disks] \label{example small disk}
	Recall the definition \ref{def H^2D} of $ H^2(\mathbb{D}) $.  We can restrict the supremum to $ 0 < r < \epsilon $ to get a Hardy Space on the smaller disk $ \mathbb{D}_\epsilon := B(0;\epsilon) $, where $ 0 < \epsilon < 1 $, defined by 
	\begin{equation} \label{eqn 4c}
		H^2(\mathbb{D}_\epsilon)  := \{f \in \mathrm{Hol}(\mathbb{D}_\epsilon) \mid \sup_{0<r<\epsilon} \frac{1}{2 \pi} \int_{0}^{2 \pi} |f(re^{i\theta})|^2 \dee \theta < \infty\}.
	\end{equation}
	It is a quick check to see this is equivalent to the weighted $ \ell^2 $ definition: 
	\begin{equation}\label{eqn 4d}
		H^2(\mathbb{D}_\epsilon) = \{\sum_{n = 0}^{\infty}a_n z^n \mid (a_n \epsilon^n)_{n \in \mathbb{N}} \in \ell^2\}.
	\end{equation}
	Comparing with the \hyperref[Checklist]{Checklist}, \textbf{(C1)} certainly holds as $ (z/\epsilon)^k $ form an orthonormal basis. From \eqref{eqn 4c}, we see that $ {H^2(\mathbb{D}) \subseteq  H^2(\mathbb{D}_\epsilon)} $  and \textbf{(C2)} holds. The \textbf{Easy (C3)} condition also follows by definition \ref{eqn 4c}: 
	\[\left\|\frac{1}{1-z}f(z)\right\|_{H^2(\mathbb{D}_\epsilon)} \leq \frac{1}{1-\epsilon}\left\|f(z)\right\|_{H^2(\mathbb{D}_\epsilon)}.\]
The problem arises in \textbf{(C4)}. That is because $ \Lambda^{(s)} $ is bounded on $H^2(\mathbb{D}_\epsilon)$ if and only if  $\Lambda^{(s)}(z^k/\epsilon^k)= f_k(s)/\epsilon^{k} $ forms an $\ell^2$ sequence. Since $ |f_k(s)|\asymp k^{-\Re(s)} $ by Proposition \ref{F_k estimate}, this requires that
	\[\left((1/\epsilon)^k k^{-\Re(s)}\right)_{k\in \mathbb{N^*}} \in \ell^2 \quad \text{where } (1/\epsilon)>1.\] 
	However, there are no values of $ s \in \mathbb{C} $ for which this is true as the  sequence above is unbounded. \qed
\end{example}
In the next example we see that \textbf{(C4)} does hold. 
\begin{example} \label{example logn}
	For $\alpha >0$, consider the space of analytic functions
	\begin{equation}\label{eqn 4e}
		X_\alpha := \{\sum_{n=0}^\infty a_n z^n \mid (a_n n^{-\alpha})_{n\in \mathbb{N^*}}  \in \ell^2 \}
	\end{equation}
	with the inner product $\langle\sum_{n=0}^\infty a_n z^n, \sum_{n=0}^\infty b_n z^n\rangle = a_0\overline{b_0} + \sum_{n=1}^{\infty} a_n \overline{b_n} n^{-2\alpha} $.	Condition \textbf{(C1)} holds as $ (n^\alpha z^n )_{n \in \mathbb{N^*} }\cup\{1\} $ forms an orthonormal basis for $X_\alpha$. Also, \textbf{(C2)} holds as $ H^2(\mathbb{D}) \subseteq X_\alpha$ and the norm on $X_\alpha$ is dominated by the $ H^2(\mathbb{D}) $ norm. Condition \textbf{(C4)} also holds: $\Lambda^{(s)} $ is bounded on $X_\alpha$ if and only if $\Lambda^{(s)}( n^\alpha z^n)=n^\alpha f_n(s) \in \ell^2$. Since $ |f_n(s)| \asymp n^{-\Re(s)}$, we require that
	\[\left(n^\alpha n^{-\Re(s)}\right)_{n\in \mathbb{N^*}} \in \ell^2 \quad \text{where } \alpha>0.\] 
	This holds for $ \Re(s)>\frac{1}{2} + \alpha $. It follows that if \textbf{(C3)} holds for any $0<\alpha<1/2$, then the checklist is satisfied by $X_\alpha$ and we obtain a non-trivial zero free half-plane for $\zeta$. But verifying \textbf{(C3)} for any such $\alpha$ is not easy. So lets consider \textbf{Easy (C3)} instead. Define the functions $ f_\delta(z) = \sum_{m=1}^{\infty} m^\delta z^m $ for some real $ \delta>0$. Then $ f_\delta \in X_\alpha $ if and only if $\delta<\alpha - \frac{1}{2}$. Since $(I-S)^{-1}$ is formally the operator of multiplication by $\frac{1}{1-z}$, we have
	\[(I-S)^{-1} f_\delta = \sum_{n=0}^{\infty}z^n \; \cdot \sum_{m=1}^{\infty} m^\delta z^m.\]
	By the ratio test we see that the infinite sum has radius of convergence $1$. Collecting the coefficient of $ z^k $ we see that $  (I-S)^{-1}f_\delta = \sum_{k=0}^{\infty}c_kz^k $ where $ c_k = \sum_{l=1}^{k} l^\delta $. Since $\delta>0$, we can bound this sum from below as follows
	\[ c_k = \sum_{l=1}^{k} l^\delta \geq \int_{0}^{k} x^\delta \dee x = \frac{k^{\delta+1}}{\delta +1} .\]
	Hence for $ (I-S)^{-1}f_\delta  \in X_\alpha $ we must necessarily have 
	\[\sum_{k=1}^{\infty}k^{2(\delta + 1)}k^{-2\alpha} < \infty\]
	\noindent or equivalently $ \delta <  \alpha - \frac{3}{2} $. Hence if we choose $\delta$ such that $\alpha-\frac{3}{2}<\delta<\alpha-\frac{1}{2}$, then  $f_\delta\in X_\alpha$ but $ (I-S)^{-1}f_\delta  \notin X_\alpha $. Therefore \textbf{Easy (C3)} fails for $X_\alpha$.\qed
\end{example}
\subsection{Analysis of weights}
We now consider the general setting of weighted $ \ell^2 $ sequence spaces. 

\noindent
	
		\begin{defn} For $ w_n \geq 1 \; (n\in\mathbb{N})$, we define the following Hilbert space: 
			\begin{equation}\label{def sequence space X}
				X = \{\sum_{n=0}^{\infty}a_n z^n \mid \left(a_n/w_n\right)_{n \in \mathbb{N}} \in \ell^2\}\simeq\ell^2_w
			\end{equation}
			where $ X $ is equipped with the inner product $$\langle\sum_{n=0}^\infty a_n z^n, \sum_{n=0}^\infty b_n z^n\rangle = \sum_{n=0}^{\infty} a_n \overline{b_n}/{w_n}^2.$$ Clearly $X$ is isometrically isomorphic to a weighted $ \ell^2 $-space.
		\end{defn}

It follows from the definition that $ (z^k w_k)_{k \in \mathbb{N}}$ forms an orthonormal basis for $ X $. As $ w_n \geq 1 $, the norm of $ X $ is dominated by that of $ H^2(\mathbb{D}) $ and $  H^2(\mathbb{D}) \subset X$. Therefore \textbf{(C1)} and \textbf{(C2)} both hold. We shall therefore focus on \textbf{Easy (C3)} and \textbf{(C4)}.

{ \noindent The examples we considered in Subsection \ref*{Some examples} are:
\begin{enumerate}
	\item $ w_n = 1  $ : Corresponds to $ X=H^2(\mathbb{D}) $. Here $ (I-S)^{-1} $ is not invertible, but $\Lambda^{(s)}$ is bounded for $ \Re(s) > 1/2 $. 
	\item $ w_n = (1/\epsilon)^n$ for $0<\epsilon<1$ : Corresponds to  $ H^2(\mathbb{D}_\epsilon)$ in Example~\ref{example small disk}. Here $I-S$ is invertible, but $\Lambda^{(s)}$ is not bounded for any $s$.
	\item $ w_n = n^\alpha  $, $\alpha > 0$ : Corresponds to $ X $ in Example~\ref{example logn}. Here $I-S$ is not invertible, but $\Lambda^{(s)}$ is bounded for $\Re(s)>1/2 + \alpha$.
\end{enumerate}
}

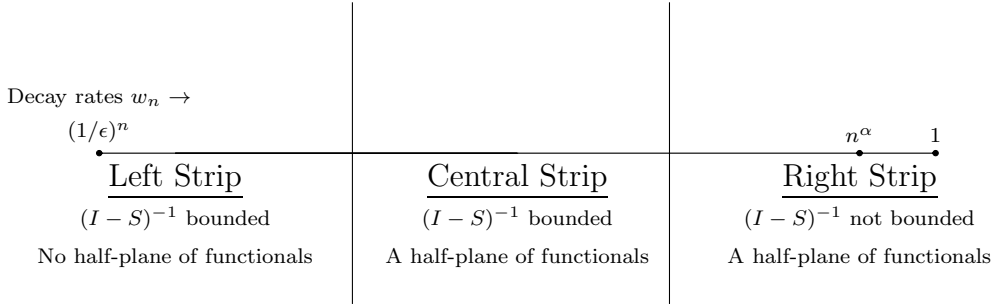
\begin{figure}[!h]
	\begin{tikzpicture}[node distance=7cm]
		\filldraw 
		
		(0,0) circle (1pt) node[align=center,   above] {\scriptsize $ (1/\epsilon)^n $} --
		(11,0) circle (1pt) node[align=center,   above] {\scriptsize $ 1 $}
		(10,0) circle (1pt) node[align=center,   above] {\scriptsize $ n^\alpha $} 
		(1,0) circle (0pt) node[align=center,   below] {\underline{Left Strip}\\ \scriptsize $(I-S)^{-1}$ bounded  \\ \scriptsize No half-plane of functionals} --
		(5.5,0) circle (0pt) node[align=center, below] {\underline{Central Strip} \\ \scriptsize $(I-S)^{-1}$ bounded  \\ \scriptsize A half-plane of functionals}      
		(10,0) circle (0pt) node[align=center,  below] {\underline{Right Strip}\\ \scriptsize$(I-S)^{-1}$ not bounded\\ \scriptsize A half-plane of functionals};
		\draw (3.33,-2)--(3.33,2);
		\draw (7.5,-2)--(7.5,2);
		\draw (0,1) node[align=center, below] { \scriptsize Decay rates $ w_n $ $\rightarrow$};
	\end{tikzpicture} 
	\caption{Weights and Conditions}
	\label{fig:Weights and Conditions AB}
\end{figure}

A general  tendency can be observed as depicted by Figure 2. If the decay rates of weights are too fast (left strip), we do not have a half-plane where $\Lambda^{(s)}$ is bounded. If the decay rates are too slow (right strip), then we do not have $ (I-S)^{-1} $ as a bounded operator. Ideally we would like to obtain weights that belong between these extremes (central strip).

\begin{comment}
	\begin{rem}
		The basic structure of $ X $ is, simply put, a weighted $ \ell^2$-space. $ z^k $ can be interpreted formally as symbols denoting the sequence with only the $ k^{\text{th}} $ term as 1 and the rest 0. The operator $ (I-S) $ is also well described in sequence spaces. The inverse can be seen as a formal convolution with the sequence $ (1,1,1,\ldots) $
		\[\frac{1}{1-z} \cdot \sum_{n=0}^{\infty} a_n z^n := \sum_{n=0}^{\infty}\left(\sum_{k=0}^{n}a_k\right)z^n\]
		If this multiplication is well defined in a space $ X $, it is easy to see this is the inverse to the operator $ (I-S) $. 
	\end{rem}
\end{comment}
\begin{comment}
	\begin{rem}
		It would also be instructive to look at the analytic properties of these arbitrarily constructed spaces $ X $. A similar class of weighted $ \ell^2 $-spaces was defined in \cite{trieu:weightedhardyspacespaper} and extensively studied. Since we consider weights somewhere between $ w_n = 0 $ (Unit Disk) and $ w_n = n $ (Smaller Disks), the corresponding functions will be holomorphic on a disk.
	\end{rem}
\end{comment}

 The next result characterizes the weights required for $X$ to satisfy \textbf{(C4)}.

		\begin{prop} \label{lem Condition B}
			Given a  weighted Hardy space $ X $ as in \eqref{def sequence space X} and $s\in\mathbb{C}$, the functional $\Lambda^{(s)}$ is bounded on $X$ if and only if 
			\begin{equation}\label{eqn Condition B}
				\left(\frac{w_k}{k^{\Re(s)}}\right)_{k\in \mathbb{N^*}} \in \ell^2 .
			\end{equation}
			\end{prop}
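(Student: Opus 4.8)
The plan is to recognize that $\Lambda^{(s)}$ is a linear functional on the Hilbert space $X \simeq \ell^2_w$, whose orthonormal basis is $(z^k w_k)_{k \in \mathbb{N}}$, and that such a functional is bounded precisely when the sequence of its values on the orthonormal basis is square-summable. So first I would compute $\Lambda^{(s)}$ on the orthonormal basis: by the defining formulas in \textbf{(C4)} we have $\Lambda^{(s)}(z^k w_k) = w_k f_k(s)$ for $k \geq 1$ and $\Lambda^{(s)}(w_0 \cdot 1) = -w_0/s$. Then by the Riesz representation theorem (or simply by the characterization of bounded functionals on a Hilbert space via the $\ell^2$-summability of coefficients against an orthonormal basis), $\Lambda^{(s)}$ extends to a bounded functional on $X$ if and only if $\big(w_k f_k(s)\big)_{k \in \mathbb{N}} \in \ell^2$, the single $k=0$ term being irrelevant to summability.

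Next I would invoke Proposition \ref{F_k estimate}, which gives $|f_k(s)| \asymp k^{-\Re(s)}$ for all $k \geq 1$ whenever $\Re(s) > 0$; substituting this into the $\ell^2$ condition shows that $\big(w_k f_k(s)\big)_k \in \ell^2$ if and only if $\big(w_k / k^{\Re(s)}\big)_{k \geq 1} \in \ell^2$, which is exactly \eqref{eqn Condition B}. This handles the case $\Re(s) > 0$ cleanly. For $\Re(s) \leq 0$ one should check the statement still holds: since each $w_k \geq 1$ and $k^{-\Re(s)} \geq 1$ in that regime, both the sequence $(w_k/k^{\Re(s)})_k$ and $(w_k f_k(s))_k$ fail to be in $\ell^2$ (the latter because $f_k(s)$ does not tend to $0$ — indeed $|f_k(s)|$ grows like $k^{-\Re(s)}$, which is bounded below, so the terms $w_k f_k(s)$ do not even vanish), so the ``if and only if'' holds vacuously; alternatively, one restricts attention to $\Re(s) > 0$ since that is the only range of interest and the proposition is applied only there.

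I do not anticipate a serious obstacle here: the statement is essentially a restatement of the Hilbert-space fact that a densely defined functional with prescribed values on an orthonormal basis is bounded iff those values form an $\ell^2$ sequence, combined with the already-proven growth estimate for $f_k(s)$. The one point requiring a little care is making precise the sense in which ``$\Lambda^{(s)}$ is bounded'' — namely that the formulas in \textbf{(C4)} define $\Lambda^{(s)}$ only on the span of the basis, and boundedness means this densely-defined functional admits a (necessarily unique) continuous extension to all of $X$. Once that is stated, the $\asymp$ estimate from Proposition \ref{F_k estimate} does all the remaining work, since $\asymp$ is preserved under taking $\ell^2$-norms of the tails.
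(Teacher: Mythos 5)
Your proof is correct and takes essentially the same route as the paper's: expand $\Lambda^{(s)}$ against the orthonormal basis $(w_k z^k)_{k\in\mathbb{N}}$ of $X$, invoke the Hilbert-space fact that boundedness is equivalent to $\ell^2$-summability of the values $w_k f_k(s)$, and conclude via the estimate $|f_k(s)|\asymp k^{-\Re(s)}$ from Proposition \ref{F_k estimate}. Your additional remark about the regime $\Re(s)\leq 0$ (where the cited estimate is not stated) is a reasonable bit of extra care that the paper's proof omits.
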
 
\begin{proof}
	The monomials $(z^k w_k)_{k\in\mathbb{N}} $ form an orthonormal basis for $ X $. Hence $ \Lambda^{(s)} $ is bounded on $X$ precisely when $\Lambda^{(s)}(z^k w_k)=w_kf_k(s)$ forms an $\ell^2$ sequence. Therefore the estimate $|f_k(s)|\asymp k^{-\Re(s)}$ (Proposition \ref{F_k estimate}) implies that this is equivalent to 
	\[\left(\frac{w_k}{k^{\Re(s)}}\right)_{k\in \mathbb{N^*}} \in \ell^2. \] 
	This concludes the result.
\end{proof}

The following is a necessary condition for \textbf{Easy (C3)} to hold.

		\begin{prop} \label{lem Condition A}
			Given a  Hilbert Space $ X $ as in \eqref{def sequence space X}, let 
			\[ r_m  := \sum_{n=m}^{\infty} \frac{{w_m}^2}{{w_n}^2}.\]
			If $I-S$ is invertible on $X$, then $(r_m)_{m\in\mathbb{N}}$ is a bounded sequence.
		\end{prop}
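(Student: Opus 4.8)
The plan is to exploit the explicit formula for $(I-S)^{-1}$ on the dense span of monomials, namely $(I-S)^{-1}z^n = \sum_{k\ge n} z^k$, and to test the hypothetical boundedness of $(I-S)^{-1}$ against the orthonormal basis vectors. Recall that $(z^k w_k)_{k\in\mathbb{N}}$ is an orthonormal basis of $X$, so for a fixed $m$ the unit vector $e_m := z^m w_m$ has $(I-S)^{-1}e_m = w_m\sum_{n\ge m} z^n$. Expressing this in the orthonormal basis, $z^n = (z^n w_n)/w_n$, we get $(I-S)^{-1}e_m = \sum_{n\ge m} (w_m/w_n)\,(z^n w_n)$, so by orthonormality
\[
\big\| (I-S)^{-1} e_m \big\|_X^2 = \sum_{n=m}^\infty \frac{w_m^2}{w_n^2} = r_m .
\]
If $I-S$ is invertible on $X$, then $(I-S)^{-1}$ is a bounded operator (the bounded inverse theorem, $X$ being a Hilbert space), say with norm $M$. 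Applying this to each unit vector $e_m$ gives $r_m = \|(I-S)^{-1}e_m\|_X^2 \le M^2$ for all $m$, so $(r_m)$ is bounded by $M^2$.

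The one point that needs a small argument is that $(I-S)^{-1}$ really is given by the multiplication-by-$\frac{1}{1-z}$ formula on monomials, i.e. that $(I-S)^{-1}z^m = \sum_{n\ge m} z^n$ as an element of $X$. First one should check the series $\sum_{n\ge m} z^n$ converges in $X$: its partial sums differ by $\sum_{n=m}^N (z^n w_n)/w_n$, whose squared norm tail is $\sum_{n=N+1}^\infty w_n^{-2}$, which is finite precisely because $r_m<\infty$ — but note that if $r_m=\infty$ for some $m$ we are already done, since then no bound can hold and the hypothesis that $I-S$ is invertible is contradicted by the same computation applied formally. So assume all $r_m<\infty$; then $g_m := \sum_{n\ge m} z^n \in X$, and a direct computation gives $(I-S)g_m = g_m - Sg_m = z^m$, since $Sg_m = \sum_{n\ge m} z^{n+1} = \sum_{n\ge m+1} z^n = g_{m+1}$ and $g_m - g_{m+1} = z^m$. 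Because $I-S$ is injective (it is a left shift up to the constant term; more directly, $(I-S)f=0$ forces all Taylor coefficients equal, hence $f\equiv c$, and $c\in X$ with $(I-S)c=0$ forces $c=0$), the element $g_m$ is the unique preimage of $z^m$, so indeed $(I-S)^{-1}z^m = g_m$.

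I do not expect any serious obstacle here; the statement is essentially a packaging of the computation $\|(I-S)^{-1}e_m\|^2 = r_m$ together with the bounded inverse theorem. The only place to be careful is the bookkeeping just described: handling the case where some $r_m = \infty$ (which immediately yields the conclusion, or rather contradicts the hypothesis), and verifying that $\tfrac{1}{1-z}$ genuinely represents $(I-S)^{-1}$ on $X$ rather than just formally — but both are short. If one prefers to avoid invoking the formula for the inverse, an alternative is to observe directly that boundedness of $(I-S)^{-1}$ is equivalent to a lower bound $\|(I-S)f\|_X \ge c\|f\|_X$ on a dense set, and to plug in $f = g_m^{(N)} := \sum_{n=m}^{N} z^n$, for which $(I-S)f = z^m - z^{N+1}$ has norm $\le \sqrt{w_m^2 + w_{N+1}^2}\cdot(\text{const})$ while $\|f\|_X^2 = \sum_{n=m}^N w_n^{-2} \to r_m$; letting $N\to\infty$ forces $r_m \le (w_m^2 + \limsup_N w_{N+1}^2)/c^2$, and since $w_N \to \infty$ is impossible when $r_m<\infty$ the extra term is controlled, again giving a uniform bound. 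Either route is routine.
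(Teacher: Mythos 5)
Your proposal is correct and follows essentially the same route as the paper: test $(I-S)^{-1}$ on the (normalized) monomials, compute $\|(I-S)^{-1}(w_m z^m)\|_X^2 = r_m$ using the explicit formula $\tfrac{1}{1-z}z^m=\sum_{n\ge m}z^n$, and bound $r_m$ by the squared operator norm. The extra care you take (convergence of the series in $X$, injectivity of $I-S$, the case $r_m=\infty$) fills in details the paper leaves implicit but does not change the argument.
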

	\begin{proof}
	\begin{comment}
		Given $f = \sum_{n=0}^{\infty} a_n z^n \in X$, 
	\begin{align*}
		\left\| (I-S)^{-1} \sum_{n=0}^{\infty} a_n z^n \right\|_X^2 &= \sum_{n=0}^{\infty} {\left\lvert \sum_{m=0}^{n} a_m\right\rvert}^2  e^{-2cw_n}\\
		&\leq \sum_{n=0}^{\infty} {\sum_{m=0}^{n} |a_m|^2} (n+1)  e^{-2cw_n} \, \text{(By Cauchy-Schwarz inequality)}\\
		&= \sum_{m=0}^{\infty} |a_m|^2 \sum_{n=m}^{\infty} (n+1) e^{-2cw_n} \\
		&= \sum_{m=0}^{\infty} |a_m|^2 e^{-2cw_m}\underbrace{\sum_{n=m}^{\infty} (n+1) e^{-2cw_n}\cdot e^{2cw_m}}_{r_m} 
	\end{align*}
	The terms $ r_m $ are completely independent of the values of $ a_m $. 
	
	\noindent Let us assume there is a uniform bound $ C $ on $ r_m $. Then:
	\[\|(I-S)^{-1} f\|_X^2 \leq C \cdot \sum_{m=0}^{\infty} |a_m|^2 e^{-2cw_m} = C \| f \|_X^2\]
	Hence the operator $ (I-S)^{-1} $ is well defined and bounded.
	
	\end{comment}
	\noindent Suppose the operator $(I-S)^{-1} $ is well defined and bounded, with operator norm $ C > 0 $. For each $ m \in \mathbb{N} $, consider $ (I-S)^{-1} z^m $ in the equation above. Then
		\[\|(I-S)^{-1} z^m\|_X^2 = r_m / {w_m}^2 = r_m \|z^m\|_X^2.\]
	By definition of the operator norm of $ (I-S)^{-1} $, $ r_m \leq C^2  $ for all $ m \in \mathbb{N} $
\end{proof}

Table 1 includes a collection of weights that have been tested.
\begin{table}[h!]
	
	\caption{Weights $ w_n $ and their classifications}
	\label{fig: table of classifications}
	\renewcommand{\arraystretch}{1}
	\centering
	\begin{center}
		\begin{tabular}[c]{|c |c|c|c|}
			\hline
			Weight $ w_n $& $I-S$ invertible & $\Lambda^{(s)}$ bounded & Strip \\
			\hline
			$ 1 $ & \xmark & \cmark & Right \\
			\hline
			$ n^\alpha $ & \xmark & \cmark & Right\\
			\hline
			$ n^\alpha + (\log n)^\beta$ & \xmark & \cmark &  Right \\
			\hline 
			$ \exp{((\log n)^{1+\alpha})}$, $ \alpha>0 $& \xmark & \xmark & - \\
			\hline
			$ \exp(n^\alpha) $, $ 0<\alpha<1 $ & \xmark & \xmark & -\\
			\hline
			$ (1/\epsilon)^n $ & \cmark & \xmark & Left \\
			\hline
			$  \exp(n^\alpha) $, $ \alpha>1 $ & \cmark & \xmark & Left \\
			\hline
		\end{tabular}
	\end{center}
	
\end{table}

\subsection{Extremal behavior of weights}

Suppose we have weights $ w_n $ for a space $X$ that satisfies both conditions \textbf{Easy (C3)} and \textbf{(C4)} (thus giving a zero-free half plane $ \Re(s)>r $). We are only interested in $\frac{1}{2}<r<1$ since $\zeta(s)$ has no  zeroes for $ r = \Re(s) \geq 1 $. The following result shows that such weights $w_n$ necessarily exhibit extremely divergent behavior.

\begin{prop} \label{prop limsup liminf}
	Let $ w_n $ be the weights for a sequence space $X$ satisfying conditions \textbf{Easy (C3)} and \textbf{(C4)} with $\frac{1}{2}<r<1$. If $ (n_i) \subseteq \mathbb{N} $ is a subsequence with $ \sum \frac{1}{n_i} = \infty $, then
	\begin{equation}\label{eqn limsupinf}
		\liminf_{i\to\infty}\frac{w_{n_i}}{n_i^{r-\frac{1}{2}}} = 0  \text{ \quad and \quad}
		\limsup_{i\to\infty}\frac{w_{n_i}}{n_i^{r-\frac{1}{2}}} = \infty.
	\end{equation}
	In particular, $ \lim_{i\to \infty} \frac{w_{n_i}}{n_i^{r-\frac{1}{2}}} $ does not exist for any such subsequence $ (n_i) $. 
\end{prop}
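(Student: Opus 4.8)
The plan is to derive the two conditions \textbf{Easy (C3)} and \textbf{(C4)} as constraints on the sequence $(w_n)$ and then show that on the subsequence $(n_i)$ these constraints force oscillation of $w_{n_i}/n_i^{r-1/2}$ between $0$ and $\infty$. Concretely: condition \textbf{(C4)} with half-plane $\mathbb{C}_r$ gives, via Proposition \ref{lem Condition B}, that $\left(w_k/k^{r}\right)_k\in\ell^2$, i.e. $\sum_k w_k^2/k^{2r}<\infty$. Condition \textbf{Easy (C3)} gives, via Proposition \ref{lem Condition A}, that the sequence $r_m=\sum_{n\ge m}w_m^2/w_n^2$ is bounded, say $r_m\le C$ for all $m$; in particular, taking just the $n=m$ term plus a few more, boundedness of $r_m$ forces a lower bound on how fast $w_n$ must grow relative to $w_m$ on average.

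\medskip

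\noindent First I would extract a one-sided pointwise lower bound from $r_m\le C$. Since $r_m=1+\sum_{n>m}w_m^2/w_n^2\le C$, for the subsequence one gets in particular $\sum_{n>m}w_m^2/w_n^2\le C-1$, so the weights cannot stay comparable: roughly, $w_n\gg w_m$ for a positive-density set of $n>m$. I would make this precise by summing $r_m$ over $m$ in the subsequence $(n_i)$ with the weights $1/n_i$. The assumption $\sum 1/n_i=\infty$ is exactly what is needed to run a ``divergent weighted average'' argument: if $w_{n_i}/n_i^{r-1/2}$ were bounded below by a constant $c>0$ for all large $i$, then $\sum_i (1/n_i)\, w_{n_i}^2/n_i^{2r-1}\ge c^2\sum_i 1/n_i=\infty$, which I want to contradict using \textbf{(C4)}; and conversely if it were bounded above, I want to contradict \textbf{Easy (C3)}. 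So the two halves of \eqref{eqn limsupinf} are dual: the $\liminf=0$ half comes from \textbf{(C4)} (the $\ell^2$ condition can't tolerate $w_k/k^r$ staying large on a set $\{1/n_i\}$ of infinite mass) and the $\limsup=\infty$ half comes from \textbf{Easy (C3)} (the bounded-$r_m$ condition can't tolerate $w_n/n^{r-1/2}$ staying small on such a set).

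\medskip

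\noindent For the $\liminf = 0$ direction: suppose $w_{n_i}\ge \delta\, n_i^{r-1/2}$ for all large $i$. Then $w_{n_i}^2/n_i^{2r}\ge \delta^2/n_i$, and since $\sum 1/n_i=\infty$ we get $\sum_i w_{n_i}^2/n_i^{2r}=\infty$, contradicting that $\left(w_k/k^r\right)_k\in\ell^2$ (the full series dominates the subseries). For the $\limsup=\infty$ direction: suppose $w_{n_i}\le M\, n_i^{r-1/2}$ for all large $i$. I would use \textbf{(C4)} in reverse — actually $\ell^2$-summability of $w_k/k^r$ gives an \emph{upper} density statement, namely $w_k\le \varepsilon_k k^r$ with $\sum \varepsilon_k^2<\infty$, so $w_n$ is typically much smaller than $n^r$ — and combine it with the lower bound forced by $r_m\le C$. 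More directly: pick a deep index $n_j$ in the subsequence; boundedness $r_{n_j}\le C$ means $\sum_{n\ge n_j} w_{n_j}^2/w_n^2\le C$, so $\sum_{n\ge n_j} 1/w_n^2 \le C/w_{n_j}^2$; if $w_{n_j}\le M n_j^{r-1/2}$ this says $\sum_{n\ge n_j}1/w_n^2\ge$ (something not too small), while on the other hand the \textbf{(C4)} $\ell^2$-bound forces $w_n$ to grow fast enough along most of $\mathbb{N}$ that $\sum 1/w_n^2<\infty$ with a tail bound incompatible with the previous inequality once $n_j$ ranges over the infinite-mass subsequence. Stitching these tail estimates together along $(n_i)$, using $\sum 1/n_i=\infty$ to accumulate the contradiction, gives $\limsup=\infty$. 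The last sentence of the proposition is then immediate: a sequence with $\liminf 0$ and $\limsup\infty$ has no limit.

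\medskip

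\noindent \textbf{The main obstacle} I anticipate is the $\limsup=\infty$ half: turning the boundedness of $r_m$ into a usable \emph{lower} bound on individual $w_n$ (or on tails $\sum 1/w_n^2$) and then marrying it cleanly with the \textbf{(C4)} $\ell^2$-condition so that the divergence $\sum 1/n_i=\infty$ actually bites. The $\liminf=0$ half is a short Cauchy–Schwarz/comparison argument, but the $\limsup=\infty$ half requires carefully choosing which indices to test $r_m\le C$ on and controlling the interaction between the two weight conditions over the tail; getting the exponent bookkeeping right (the gap between $k^{r}$ in \textbf{(C4)} and $k^{r-1/2}$ in the statement is precisely the $\ell^2$ ``half-power'' slack) is where the delicate estimation lies.
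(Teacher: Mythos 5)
Your $\liminf=0$ half is exactly the paper's argument and is fine: the assumed lower bound $w_{n_i}\ge\delta\, n_i^{r-1/2}$ makes the subseries $\sum_i w_{n_i}^2/n_i^{2r}\ge\delta^2\sum_i 1/n_i$ diverge, contradicting the $\ell^2$ condition $(w_k/k^r)_k\in\ell^2$ from Proposition \ref{lem Condition B}.

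The $\limsup=\infty$ half, which you yourself flag as the main obstacle, is where the proposal has a genuine gap. Your sketch contains a directional error: from $r_{n_j}\le C$ you correctly get the \emph{upper} bound $\sum_{n\ge n_j}1/w_n^2\le C/w_{n_j}^2$, but you then claim that $w_{n_j}\le M n_j^{r-1/2}$ turns this into a statement that the tail is ``not too small'' --- it does not; a small $w_{n_j}$ only makes the upper bound weaker. You then propose to bring \textbf{(C4)} back in and ``stitch tail estimates together along $(n_i)$,'' but no such interaction between the two conditions is needed, and the stitching is never actually carried out. The paper's argument is a one-line comparison using \emph{only} Proposition \ref{lem Condition A}: finiteness of $r_N$ for a single index $N$ already forces $\sum_{n\ge N}1/w_n^2<\infty$, whereas the assumed bound $w_{n_i}\le C''\, n_i^{r-1/2}$ gives
\[
\sum_{i\ge N}\frac{1}{w_{n_i}^2}\;\ge\;\frac{1}{C''^2}\sum_{i\ge N}\frac{1}{n_i^{2r-1}}\;\ge\;\frac{1}{C''^2}\sum_{i\ge N}\frac{1}{n_i}\;=\;\infty,
\]
where the middle inequality uses $2r-1<1$, i.e.\ precisely the hypothesis $r<1$. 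Your proposal never identifies this use of $r<1$, which is the actual ``exponent bookkeeping'' that makes the divergence $\sum 1/n_i=\infty$ bite; without locating it, the $\limsup$ half does not close.
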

\begin{proof}
	We prove by contradiction for each half of the result. 
	
	Suppose, $ \liminf_{i\to\infty}\frac{w_{n_i}}{n_i^{r-\frac{1}{2}}} = C > 0 $. Then for any $ C^\prime < C$, there exists $ N \in \mathbb{N} $ such that for all  $ i\geq N $, $ \frac{w_{n_i}}{n_i^{r-\frac{1}{2}}} > C^\prime $. Thus, 
	\[\sum_{i=N}^{\infty} \left(\frac{w_{n_i}}{n_i^r}\right)^2 > C^{\prime 2}\sum_{i=N}^{\infty} \left(\frac{n_i^{r-\frac{1}{2}}}{n_i^r}\right)^2 = C^{\prime 2}\sum_{i=N}^{\infty}\frac{1}{n_i} = \infty. \]
	Hence we contradict Proposition \ref{lem Condition B} since $ (w_n/n^r)_{n>0} \in \ell^2$. 
	
	Now suppose, $ \limsup_{i\to\infty}\frac{w_{n_i}}{n_i^{r-\frac{1}{2}}} < +\infty $. Then, there is $ C^{\prime\prime} > 0 $ and $ M \in \mathbb{N} $ such that for all $ i>M $, $ \frac{w_{n_i}}{n_i^{r-\frac{1}{2}}} <  C^{\prime\prime} $. 
	This gives,
	\[\sum_{i=N}^{\infty} \frac{1}{w_{n_i}^2}> \frac{1}{C^{\prime\prime 2}} \sum_{i=N}^{\infty} \frac{1}{n_i^{2r-1}} > \frac{1}{C^{\prime\prime 2}} \sum_{i=N}^{\infty} \frac{1}{n_i} = \infty. \]
	Hence we contradict Proposition \ref{lem Condition A} since $ r_N $ is finite.
	\end{proof}

We note that Proposition \ref{prop limsup liminf} highlights a tension between \textbf{Easy (C3)} and \textbf{(C4)} which is supported by Figure 2 and Table 1.

\section{The Classical Hardy spaces $ H^p$ }
In this section we consider the spaces $H^p$ ($p>0$) consisting of functions $f$ holomophic in $\D$ for which
$$\norm{f}_p^p := \sup_{0<r<1} \int_\T \abs{f(rz)}^p \dd m(z)<\infty$$
where $m$ is normalized Lebesgue measure on $\mathbb{T}$. The text of Duren \cite{duren:hpspacesbook} is a classical reference. The $H^p$ spaces are Banach spaces for $p\geq1$ and complete metric spaces for $0<p<1$. By Fatou's theorem, any $f\in H^p$ has radial limits a.e. on $\mathbb{T}$ with respect to $m$. Using $f$  to also denote the radial limit function, we have
$$\norm{f}_p^p = \int_\T \abs{f(z)}^p \dd m(z).$$ 
The case $ p = 2 $ gives us the Hardy-Hilbert space $H^2(\mathbb{D}) $ as defined in \ref{def H^2D}. In this section we will focus on $ 0<p\leq 2 $. It is well-known that the monomials form a basis for $H^p$, that $H^p\subset H^q$ for $p>q$ and that the topology of $H^p$ weakens as $p$ decreases. Therefore the conditions \textbf{(C1)} and \textbf{(C2)} in the \emph{checklist} are satisfied for $X=H^p$ with $0<p\leq 2$. We shall see that \textbf{(C4)} also holds for all $0<p\leq 2$. As for \textbf{(C3)}, the next subsection uses the invertibility of $I-S$ between distinct $H^p$ spaces to prove \textbf{(C3)} when $0<p<1$. In Subsection 4.2 we show that \textbf{(C3)} for some $1<p\leq2$ would imply $\zeta(s)\neq 0$ for $\Re(s)>1/p$.  This is an $H^p$ analogue of Beurling's result \cite{beurling:closureproblempaper}.

\subsection{The $ H^p$ spaces for $ 0 < p < 1 $}
We first show that \textbf{(C4)} holds in this case.

\begin{prop}\label{boundedness 0<p<1}
$ \Lambda^{(s)}$ is bounded on $H^p$ for $ 0<p<1 $ if $ \Re(s) > \frac{1}{p} $.
\end{prop}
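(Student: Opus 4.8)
The plan is to reduce the boundedness of $\Lambda^{(s)}$ to the classical Hardy--Littlewood coefficient bound for $H^p$ with $0<p<1$, combined with the growth estimate of Proposition \ref{F_k estimate}. Recall (see \cite{duren:hpspacesbook}) that there is a constant $C_p>0$, depending only on $p\in(0,1)$, such that every $f=\sum_{n=0}^\infty a_n z^n\in H^p$ satisfies
\[
\abs{a_n}\le C_p\,(n+1)^{1/p-1}\,\norm{f}_p\qquad(n\in\N).
\]
With this in hand the strategy is: (i) write down the obvious series candidate $\varphi$ for the functional; (ii) show it converges absolutely and is bounded exactly when $\Re(s)>1/p$; and (iii) identify $\varphi$ with $\Lambda^{(s)}$ using the Schauder basis property \textbf{(C1)}.

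For steps (i) and (ii), I would define $\varphi\colon H^p\to\C$ by
\[
\varphi(f):=-\frac{a_0}{s}+\sum_{k=1}^\infty a_k\,f_k(s).
\]
By Proposition \ref{F_k estimate} there is $A=A(s)>0$ with $\abs{f_k(s)}\le A\,k^{-\Re(s)}$ for all $k\ge1$, so applying the Hardy--Littlewood bound termwise gives
\[
\abs{\varphi(f)}\le\frac{\abs{a_0}}{\abs{s}}+A\sum_{k=1}^\infty\abs{a_k}\,k^{-\Re(s)}\le C_p\,\norm{f}_p\left(\frac{1}{\abs{s}}+A\sum_{k=1}^\infty(k+1)^{1/p-1}k^{-\Re(s)}\right).
\]
The last series is comparable to $\sum_{k\ge1}k^{1/p-1-\Re(s)}$, which converges precisely when $1/p-1-\Re(s)<-1$, i.e. when $\Re(s)>1/p$. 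Hence for such $s$ the series defining $\varphi$ is absolutely convergent and $\abs{\varphi(f)}\le C(s,p)\,\norm{f}_p$, so $\varphi$ is a bounded linear functional on $H^p$.

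For step (iii), I would note that $\varphi(z^k)=f_k(s)$ for $k\ge1$ and $\varphi(1)=-1/s$, so $\varphi$ coincides with $\Lambda^{(s)}$ on every monomial. Since the monomials form a Schauder basis of $H^p$ (condition \textbf{(C1)}), the partial sums $\sum_{k=0}^n a_k z^k$ converge to $f$ in $H^p$, and the continuity of $\varphi$ then forces $\Lambda^{(s)}(f)=\varphi(f)$ for all $f\in H^p$. This proves $\Lambda^{(s)}$ is bounded on $H^p$ whenever $\Re(s)>1/p$.

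The only non-elementary input is the Hardy--Littlewood estimate $\abs{a_n}=O((n+1)^{1/p-1}\norm{f}_p)$ for $0<p<1$, and pinning down the correct exponent $1/p-1$ there is really the crux of the matter: it is exactly this exponent that makes the convergence threshold of the defining series land at $\Re(s)>1/p$, matching the stated bound. Everything else is a routine convergence computation together with a standard appeal to the Schauder basis property \textbf{(C1)}.
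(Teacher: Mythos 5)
Your proof is correct and follows essentially the same route as the paper: both rest on the Hardy--Littlewood coefficient estimate $\abs{a_n}\lesssim n^{1/p-1}\norm{f}_p$ (Duren, Theorem 6.4) combined with $\abs{f_k(s)}\asymp k^{-\Re(s)}$ from Proposition \ref{F_k estimate}, yielding absolute convergence exactly when $\Re(s)>1/p$. Your step (iii), identifying the series with $\Lambda^{(s)}$ via the basis property, is a harmless elaboration of what the paper leaves implicit.
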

\begin{proof}
Let $ f(z) = \sum_{n=0}^{\infty} a_n z^n \in H^p $ for $ 0<p< 1 $. Then $ |a_n| \leq C n^{1/p - 1} \|f\|_{H^p}$ for some constant $ C >0 $ by \cite[Theorem 6.4]{duren:hpspacesbook}. Hence by Lemma \ref{F_k estimate}
	\[|\Lambda^{(s)}f| \leq \sum_{n=0}^{\infty} |a_n||f_n(s)| \leq C \sum_{n=0}^{\infty} n^{1/p - 1 - \Re(s)} \|f\|_{H^p}.\]
	So, $ \Lambda^{(s)} $ is bounded on $H^p$ if $ \Re(s) > \frac{1}{p} $.
\end{proof}

We now move to the proof of \textbf{(C3)}. We shall need  the following  result from Duren \cite[Theorem 6.1]{duren:hpspacesbook}.

\begin{thm}\label{Hp l^q norm relation}
	Let $1\leq q\leq2$ and $p$ satisfying $1/p+1/q=1$. If $(a_n)_{n\in \mathbb{N}}$ is a sequence in $\ell^q$, then $f(z) = \sum_{n=0}^\infty a_n z^n$ defines a function in $H^p$ satisfying
	$$\norm{f}_p \leq \norm{(a_n)_{n\in \mathbb{N}}}_q \,.$$
\end{thm}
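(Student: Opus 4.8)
The plan is to recognize Theorem~\ref{Hp l^q norm relation} as the Hausdorff--Young inequality on the circle and to obtain it by Riesz--Thorin interpolation between two elementary endpoint estimates, followed by a routine density argument to pass from polynomials to all of $\ell^q$.

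First I would set up the relevant operator. For a sequence $a=(a_n)_{n\in\N}$ with only finitely many nonzero entries, let $Ta$ denote the trigonometric polynomial $z\mapsto\sum_{n\geq0}a_nz^n$ on $\T$, that is, the boundary function of the analytic polynomial $f(z)=\sum_n a_nz^n$; we regard $T$ as a linear map from finitely supported sequences into $L^r(\T,m)$. There are two endpoints. At $q=2$, Parseval's identity --- which is exactly the isometric identification $H^2(\D)\simeq\ell^2$ underlying \eqref{eqn def H^2D sequence space} --- gives $\norm{Ta}_{L^2(\T)}=\norm{(a_n)}_{\ell^2}$, so $T\colon\ell^2\to L^2(\T)$ has norm $1$. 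At $q=1$, for every $z\in\T$ one has $|f(z)|\leq\sum_n|a_n||z|^n=\sum_n|a_n|$, so $\norm{Ta}_{L^\infty(\T)}\leq\norm{(a_n)}_{\ell^1}$ and $T\colon\ell^1\to L^\infty(\T)$ has norm $\leq1$.

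Next I would invoke the Riesz--Thorin interpolation theorem for the couples $(\ell^1,\ell^2)$ and $(L^\infty(\T),L^2(\T))$. Picking the interpolation parameter $\theta\in[0,1]$ with $1/q=1-\theta/2$ and $1/p=\theta/2$ (so that $1/p+1/q=1$, with $q$ sweeping $[1,2]$ and $p$ sweeping $[2,\infty]$) yields $\norm{Ta}_{L^p(\T)}\leq\norm{(a_n)}_{\ell^q}$ for every finitely supported $a$. Since $Ta$ has Fourier spectrum inside $\{0,1,2,\dots\}$, its harmonic extension to $\D$ is exactly the polynomial $f$, which therefore lies in $H^p$ with $\norm{f}_p=\norm{Ta}_{L^p(\T)}$. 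Finally, for a general $a\in\ell^q$ I would apply the inequality just proved to the truncations $a^{(N)}=(a_0,\dots,a_N,0,0,\dots)$: these are Cauchy in $\ell^q$, so the partial sums $f_N(z)=\sum_{n\leq N}a_nz^n$ are Cauchy in $H^p$ and converge there to some $f$ with $\norm{f}_p\leq\norm{(a_n)}_{\ell^q}$; since $H^p$-convergence forces locally uniform convergence on $\D$ and $f_N(z)\to\sum_n a_nz^n$ locally uniformly, the limit must be $f(z)=\sum_n a_nz^n$.

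I do not expect a serious obstacle here --- the argument is a clean interpolation --- but the point that needs a little care is that the interpolated operator lands in the \emph{analytic} Hardy space $H^p$ and not merely in $L^p(\T)$; this is transparent at the level of polynomials, since a one-sided Fourier spectrum is preserved both under truncation and under the interpolated operator, and it is then propagated to all of $\ell^q$ by the density step above. Of course one may instead simply cite the classical Hausdorff--Young theorem, or Duren \cite[Theorem~6.1]{duren:hpspacesbook}, as the statement does.
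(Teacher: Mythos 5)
Your proposal is correct: the Riesz--Thorin interpolation between the endpoints $\ell^1\to L^\infty(\T)$ and $\ell^2\to L^2(\T)$, followed by the density/truncation step, is precisely the standard proof of the Hausdorff--Young inequality, and the one-sided-spectrum observation correctly places the limit in $H^p$ rather than just $L^p(\T)$. The paper gives no proof of its own here --- it simply cites Duren \cite[Theorem~6.1]{duren:hpspacesbook} --- and your argument is essentially the proof given there, so there is nothing further to compare.
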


We first extend the validity of equation \eqref{eq Noor convergence of (I-S)hk} to all $H^p$ spaces with $0<p<\infty$. Denote the $\ell^q$ norm  of $f(z) = \sum_n a_n z^n$ by $\norm{f}_{\ell_q} = (\sum_n \abs{a_n}^q)^{1/q}$. 

\begin{lem}\label{Noor equation all Hp}
For all $0<p<\infty$, we have
\begin{equation} \label{Eq:sum_gk_Hp}
\sum_{k=2}^n \mu(k)(I-S) h_k \to 1-z \ \text{ in } H^p.
\end{equation}
\end{lem}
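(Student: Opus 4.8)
The idea is to prove the statement for $p=2$ first (where it is already known, equation \eqref{eq Noor convergence of (I-S)hk}), then bootstrap to all other $p$ using the nesting of Hardy spaces together with Theorem \ref{Hp l^q norm relation}. Recall that $H^p\subset H^q$ with a stronger topology when $p>q$, so convergence in $H^2$ immediately gives convergence in $H^p$ for every $0<p\le 2$; this disposes of half the range for free. For $p>2$ the inclusions run the wrong way, so a different argument is needed there, and Theorem \ref{Hp l^q norm relation} is the natural tool: it bounds the $H^p$-norm of an analytic function by the $\ell^q$-norm of its Taylor coefficients for $1\le q\le 2$, $1/p+1/q=1$. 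Thus it suffices to show that the Taylor coefficients of $\sum_{k=2}^n \mu(k)(I-S)h_k - (1-z)$ tend to $0$ in $\ell^q$ for every $1\le q<2$, which by interpolation (or a direct estimate) will follow once we control them in $\ell^1$ and in $\ell^2$.

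Concretely, write $g_n(z) := \sum_{k=2}^n \mu(k)(I-S)h_k(z) - (1-z)$ and let $(c_m^{(n)})_{m\ge 0}$ be its Taylor coefficients. The $\ell^2$-control $\norm{g_n}_{\ell_2}\to 0$ is exactly \eqref{eq Noor convergence of (I-S)hk} (since the $H^2$-norm is the $\ell^2$-norm of the coefficients). For the $\ell^1$ side, I would compute the Taylor coefficients of $(I-S)h_k$ explicitly from the definition $h_k(z) = \frac1k\frac{1}{1-z}\log\!\big(\frac{1+z+\cdots+z^{k-1}}{k}\big)$: applying $I-S = I-M_z$ kills the $\frac{1}{1-z}$ factor, leaving $(I-S)h_k(z) = \frac1k\log\!\big(\frac{1+z+\cdots+z^{k-1}}{k}\big)$, whose coefficients decay and are supported essentially on a manageable range. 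Using $\sum_{k=2}^n \mu(k)/k$-type identities (the partial sums of $\sum \mu(k)/k$ are bounded, indeed $\to 0$) together with the Möbius-function cancellation that already underlies \eqref{eq Noor convergence of (I-S)hk}, one should get a uniform-in-$n$ bound and termwise convergence of the coefficients of $g_n$ to those of $0$; combined with a dominated-convergence argument in $\ell^1$ this gives $\norm{g_n}_{\ell_1}\to 0$. Then for $1<q<2$, $\norm{g_n}_{\ell_q}\le \norm{g_n}_{\ell_1}^{2/q-1}\norm{g_n}_{\ell_2}^{2-2/q}\to 0$ by log-convexity of $\ell^q$-norms, and Theorem \ref{Hp l^q norm relation} converts this into $\norm{g_n}_p\to 0$ for the conjugate exponent $p\in(2,\infty)$. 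Together with the easy case $0<p\le 2$ this covers all $0<p<\infty$.

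The main obstacle I expect is the $\ell^1$ estimate: one must extract enough cancellation from the Möbius weights $\mu(k)$ to beat the slow ($\sim 1/k$) decay of the individual contributions, uniformly in $n$, and in particular handle the tail coefficients of $(I-S)h_k$ for $m$ large relative to $k$. This is precisely the quantitative heart of Báez-Duarte-type estimates, and it may be that the cleanest route is not a direct coefficient computation but rather to re-derive \eqref{eq Noor convergence of (I-S)hk} in a form that already gives an $\ell^1$ (or even $H^1$) bound — for instance by noting that each $(I-S)h_k$ lies in $H^1$ with a good norm bound and that the relevant partial sums form a Cauchy sequence there. If the $H^1$ convergence can be established directly, the $p>1$ cases follow by interpolation between $H^1$ and $H^2$, again bypassing Theorem \ref{Hp l^q norm relation} entirely. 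I would try the direct $\ell^1$ route first and fall back on the interpolation-from-$H^1$ argument if the coefficient bookkeeping becomes unwieldy.
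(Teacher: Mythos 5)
Your overall skeleton is the same as the paper's: dispose of $0<p\le 2$ by the known $H^2$ convergence \eqref{eq Noor convergence of (I-S)hk} and the nesting of Hardy spaces, and reduce $p>2$ to $\ell^q$-convergence of the Taylor coefficients for the conjugate exponent $1<q<2$ via Theorem \ref{Hp l^q norm relation}. The divergence is in how you propose to prove that $\ell^q$ convergence, and there the plan breaks: the $\ell^1$ endpoint you want to interpolate from is false. Write $g_n$ for the error and compute its coefficients exactly as the paper does, using $(I-S)h_k=\frac1k[\log(1-z^k)-\log(1-z)-\log k]$: for $j>n$ the coefficient of $z^j$ in $g_n$ is $\frac1j\bigl(M_n-\sum_{d\mid j,\, d\le n}\mu(d)\bigr)$ with $M_n=\sum_{k\le n}\mu(k)/k$. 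Since $\sum_{d\mid j}\mu(d)=0$ for $j>1$, every $j$ with $n<j<2n$ has $\sum_{d\mid j,\, d\le n}\mu(d)=-\mu(j)$, so for squarefree such $j$ the coefficient has modulus $\abs{M_n+\mu(j)}/j\asymp 1/j$; summing over these alone gives $\norm{g_n}_{\ell_1}\gtrsim\sum_{n<j<2n,\ \mu(j)\neq 0}1/j\to\frac{6}{\pi^2}\log 2>0$. So $\norm{g_n}_{\ell_1}$ does not tend to $0$ --- the M\"obius cancellation you hope for is genuinely absent in the tail $j>n$, which is exactly where the truncation bites --- and the log-convexity bound $\norm{g_n}_{\ell_q}\le\norm{g_n}_{\ell_1}^{2/q-1}\norm{g_n}_{\ell_2}^{2-2/q}$ has nothing to work with.

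The repair, which is what the paper does, is to estimate the $\ell^q$ norm directly for $1<q<2$ instead of interpolating: the tail is bounded by $\sum_{j>n}\tau(j)^q/j^q$, which tends to $0$ because $\tau(j)=o(j^\epsilon)$ and, crucially, $q>1$ makes the exponent summable --- the strict inequality $q>1$ is precisely what fails at your $\ell^1$ endpoint. (The head terms are handled by $\sum\mu(k)/k=0$ and $\sum\mu(k)\log k/k=-1$, as you anticipate.) Your fallback is also not viable as stated: interpolating between $H^1$ and $H^2$ only yields the spaces $H^p$ with $1\le p\le 2$, which are already covered by the $H^2$ result, and says nothing about the range $p>2$ that actually needs the new argument.
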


\begin{proof} First note that we already  have the result for $0<p\leq2$ by \eqref{eq Noor convergence of (I-S)hk} which corresponds to $2\leq q<\infty$. Therefore by Theorem  \ref{Hp l^q norm relation} it is enough to prove that $\sum_{k=2}^n \mu(k) (I-S) h_k \to 1-z$ in the $\ell^q$ sense for $1 < q <2$. We have
	\begin{equation}\label{g_k  Taylor} \sum_{k=2}^n \mu(k)(I-S) h_k(z) = \sum_{k=1}^n \frac{\mu(k)}{k} \left[ \log(1-z^k) - \log(1-z) - \log k \right]. 
	\end{equation} 
	The Taylor coefficients of $\log(1-z) = -\sum_{j=1}^\infty z^j/j$ belong to $\ell^q$ for $1 < q <2$, which implies the same for $\log(1-z^k) = -\sum_{j=1}^\infty z^{jk}/j$. Therefore the Taylor coefficients of\eqref{g_k  Taylor} belong to $\ell^q$ for each $n\geq 2$. We shall need the following relations involving the Möbius function
	\begin{equation}\label{PNT mu}
	\sum_{k=1}^\infty \frac{\mu(k)}{k}=0 \ \ \mathrm{and} \ \ \sum_{k=1}^\infty\frac{\mu(k)\log k}{k}=-1
	\end{equation}
	(see \cite[Thm. 4.16]{ApostolTomM1976Itan} and \cite[p. 185, Excercise 16]{MontgomeryHughL.2007MntI}). This implies that $$\sum_{k=1}^n \frac{\mu(k)}{k}[-\log(1-z)-\log k] \to 1$$ in $\ell^q$ norm as $n\to\infty$. Hence by \eqref{g_k  Taylor} it suffices to prove that
	\begin{equation} \label{Eq:sum_log_lq}
	\sum_{k=1}^n \frac{\mu(k)}{k} \log(1-z^k) \to -z \ \text{ in the } \ell^q \text{ sense.}
	\end{equation}
	By \cite[eq. (4.6)]{noor:paper} we have
	$$\sum_{k=1}^n \frac{\mu(k)}{k} \log(1-z^k) + z = -\sum_{j=n+1}^\infty \frac{z^j}{j} \sum_{\substack{d|j \\ 1\leq d\leq n}} \mu(d) \,$$
	with
	$$\abs{\sum_{\substack{d|j \\ 1\leq d\leq n}} \mu(d)} \leq \sum_{\substack{d|j \\ 1\leq d\leq n}} \abs{\mu(d)} \leq \sum_{d|j} 1 = \tau(j) \,,$$
where $\tau(n)$ denotes the number of divisors of $n$. Since $\tau(n)=o(n^{\epsilon})$ for every $\epsilon>0$ \cite[p. 296]{ApostolTomM1976Itan}, we get
	$$\lnorm{\sum_{k=1}^n \frac{\mu(k)}{k} \log(1-z^k) + z}_{\ell_q}^q
	= \sum_{j=n+1}^\infty \frac{1}{j^q} \abs{\sum_{\substack{d|j \\ 1\leq d\leq n}} \mu(d)}^q
	\leq \sum_{j=n+1}^\infty \frac{\sigma(j)^q}{j^q}\rightarrow 0
	$$
	since $\sigma(j)\lesssim j^\epsilon$ where $\epsilon>0$ can be chosen  small enough so that $q-\epsilon q>1$. This proves \eqref{Eq:sum_log_lq} and hence the result.
\end{proof}
Although the $I-S$ is not invertible on any $H^p$ space, $(I-S)^{-1}=M_{\frac{1}{1-z}}$ may still be bounded between different $H^p$ spaces.
\begin{lem}\label{Inverting I-S between Hp spaces} For each $0<q<1$ there exists $p>0$ sufficiently large such that the  operator $M_{\frac{1}{1-z}}$ from $H^p$ to $H^q$ is bounded.	
\end{lem}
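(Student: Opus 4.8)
The plan is to prove boundedness by a single application of Hölder's inequality on circles $\abs{z}=r$, using the elementary fact that the singular weight $\abs{1-z}^{-\beta}$ is integrable on $\T$ exactly when $\beta<1$. So first I would fix $q\in(0,1)$ and choose any $p$ with $p>\frac{q}{1-q}$ (in particular all sufficiently large $p$ qualify); note this forces $p>q$ and makes $\beta:=\frac{pq}{p-q}$ strictly less than $1$, which is the only numerical constraint needed.

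Next, for $f\in H^p$ and $0<r<1$, I would apply Hölder with conjugate exponents $p/q$ and $p/(p-q)$ to the integral defining $\norm{f/(1-z)}_q$ on the circle of radius $r$:
\[
\int_\T \frac{\abs{f(rz)}^q}{\abs{1-rz}^q}\dd m(z)\ \le\ \left(\int_\T \abs{f(rz)}^p\dd m(z)\right)^{q/p}\left(\int_\T \abs{1-rz}^{-\beta}\dd m(z)\right)^{(p-q)/p}.
\]
The first factor is at most $\norm{f}_p^q$ for every $r$, by definition of the $H^p$-norm. For the second factor I would check that $C_\beta:=\sup_{0<r<1}\int_\T\abs{1-rz}^{-\beta}\dd m(z)<\infty$: when $r\le\tfrac12$ the integrand is bounded, and when $\tfrac12\le r<1$ the bound $\abs{1-re^{i\theta}}^2=(1-r)^2+2r(1-\cos\theta)\ \gtrsim\ \abs{\theta}^2$ shows the integrand is dominated by $\abs{\theta}^{-\beta}$, which lies in $L^1(\T)$ precisely because $\beta<1$. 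Taking the supremum over $r$ then gives
\[
\norm{f/(1-z)}_q\ \le\ C_\beta^{(p-q)/(pq)}\,\norm{f}_p,
\]
and since $f/(1-z)$ is holomorphic in $\D$ this is exactly the assertion that $M_{1/(1-z)}\colon H^p\to H^q$ is bounded.

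I do not expect a serious obstacle here; the only point deserving care is that for $q<1$ membership in $H^q$ is defined through the supremum over circles $\abs z=r$ rather than a boundary integral, so the Hölder estimate must be performed on each circle with a bound uniform in $r$ — which is precisely why the uniform lower bound on $\abs{1-rz}$ above is needed. Everything else is routine, and it is worth recording that since any $p>q/(1-q)$ works, $p$ may indeed be taken arbitrarily large, as the statement allows.
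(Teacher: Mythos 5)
Your proof is correct, and it arrives at exactly the paper's numerical threshold: your condition $\beta=\frac{pq}{p-q}<1$ is algebraically identical to the paper's $q<\frac{p}{1+p}$, i.e.\ $p>\frac{q}{1-q}$. The difference is in how H\"older is packaged and where the integral is taken. The paper applies the \emph{reverse} H\"older inequality with exponents $r=q/p<1$ and $s<0$ to the product $\abs{h/(z-1)}^p\cdot\abs{1-z}^p$ on the boundary circle $\T$, whereas you apply the ordinary H\"older inequality with conjugate exponents $p/q$ and $p/(p-q)$ directly to $\abs{f(rz)}^q\abs{1-rz}^{-q}$ on each circle $\abs{z}=r$; the two inequalities are rearrangements of one another, and both reduce the problem to the integrability of $\abs{1-z}^{-\beta}$ for $\beta<1$. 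What your version buys is a cleaner handling of the point you yourself flag: since for $q<1$ membership in $H^q$ is defined by the supremum over circles, your uniform-in-$r$ estimate gives $\sup_{0<r<1}\int_\T\abs{f(rz)/(1-rz)}^q\dd m(z)<\infty$ directly. The paper's boundary-integral formulation implicitly relies on the passage from $L^q$ boundary values back to $H^q$ membership, which for $q<1$ requires knowing that $h/(1-z)$ lies in the Smirnov class (it does, being an $H^p$ function divided by the outer function $1-z$); your argument sidesteps that step entirely.
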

\begin{proof}
	We shall need the reverse H\"older's inequality: Let $0<r<1$ and $s$ satisfying $\frac{1}{r}+\frac{1}{s}=1$ (so that $s<0$). For any non-negative $f \in L^r(\mathbb{T})$ and $g \in L^s(\mathbb{T})$ with $\int g^s\dd m > 0$,
	$$\int fg \dd m \geq \left( \int f^r \dd m\right)^{1/r} \left( \int g^s \dd m\right)^{1/s} \,.$$
	Choose any $0<q<p$ (not necessarily conjugate exponents) and let $h \in H^p$. Define $r:=q/p<1$ (hence $s<0$), $f(z) := \abs{h(z)/(z-1)}^p$ and $g(z) := \abs{1-z}^p$. So we have
\[
	\int f^r\dd m = \int_\T \labs{\frac{h(z)}{z-1}}^q \dd m, \ \ 
		\int g^s\dd m = \int_\T \abs{1-z}^{ps} \dd m,\ \ \int fg\dd m = ||h||_p^p.
		\]
		Therefore we get
	\begin{equation}\label{1/1-z bounded}
	\left(\int_\T \labs{\frac{h(z)}{z-1}}^q \ddd m\right)^{1/r}\leq ||h||_p^{p}\left(\int_\T \abs{1-z}^{ps} \dd m\right)^{-1/s} .
	\end{equation}
	For the right side of \eqref{1/1-z bounded} to be finite,  we need $\int_\T \abs{1-z}^{ps} \dd m<\infty$ keeping in mind that $s<0$. This occurs precisely when
	$$ps>-1 \iff -\frac{1}{s}>p \iff \frac{1}{r}>1+p \iff \frac{q}{p}<\frac{1}{1+p} \iff q<\frac{p}{1+p} \,.$$
	So we  necessarily have $0<q<1$. Hence we conclude that for any $0<q<1$ there exists $p>0$ large enough satisfying $q<\frac{p}{1+p}$ for which \eqref{1/1-z bounded} gives 
	\begin{equation} \label{Eq:1-z_Hp_Hq}
		\lnorm{\frac{h}{1-z}}_q\leq  C_{p,q}\norm{h}_p  \ \ \ \forall \ \ h\in H^p
	\end{equation}
	where constant $C_{p,q}>0$ depends only on $p$ and $q$.  This proves the lemma.
	\end{proof}
We are now ready to prove \textbf{(C3)} for $X=H^q$ with $0<q<1$.
\begin{thm}\label{Approximation  in H^p p<1}
$ \sum_{k=2}^n \mu(k) h_k \to 1 $ in $H^q$ for $0<q<1$.
\end{thm}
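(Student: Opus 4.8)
The plan is to combine the convergence statement from Lemma~\ref{Noor equation all Hp} with the boundedness of $M_{\frac{1}{1-z}}$ from Lemma~\ref{Inverting I-S between Hp spaces}, exploiting the fact that $h_k = (I-S)^{-1}(I-S)h_k$ formally, i.e. $h_k = \frac{1}{1-z}\cdot(1-z)h_k$. Fix $0<q<1$. By Lemma~\ref{Inverting I-S between Hp spaces} there is some $p>0$ (we may clearly take $0<p<1$, shrinking if necessary, since the constraint is only $q<p/(1+p)$, and larger $p$ only helps) such that $M_{\frac{1}{1-z}}\colon H^p\to H^q$ is bounded, say with norm $C_{p,q}$. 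By Lemma~\ref{Noor equation all Hp} applied with this exponent $p$, we have $\sum_{k=2}^n \mu(k)(I-S)h_k \to 1-z$ in $H^p$. Applying the bounded operator $M_{\frac{1}{1-z}}$ to this convergent sequence, and noting that $M_{\frac{1}{1-z}}(I-S)h_k = \frac{1}{1-z}(1-z)h_k = h_k$ (valid since $(I-S)h_k = (1-z)h_k$ and the multiplication is pointwise on $\mathbb{D}$) while $M_{\frac{1}{1-z}}(1-z) = \mathbf{1}$, we obtain
\[
\sum_{k=2}^n \mu(k) h_k = M_{\frac{1}{1-z}}\!\left(\sum_{k=2}^n \mu(k)(I-S)h_k\right) \longrightarrow M_{\frac{1}{1-z}}(1-z) = 1
\]
in $H^q$ as $n\to\infty$, which is the claim.

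The only subtlety worth checking carefully is that each $h_k$ (and each partial sum) genuinely lies in $H^p$, so that the identity $h_k = M_{\frac{1}{1-z}}((I-S)h_k)$ is an honest equality of elements of $H^q$ obtained by applying a bounded operator; this follows because $h_k$ is bounded near the boundary except for a logarithmic singularity, hence lies in every $H^p$, $p<\infty$. One should also confirm that $(I-S)h_k \in H^p$, which is immediate since $(I-S)h_k = (1-z)h_k$ and $H^p$ is a module over $H^\infty \ni (1-z)$; alternatively the $\ell^q$-coefficient computation in Lemma~\ref{Noor equation all Hp} already shows the partial sums lie in the relevant spaces. Then continuity of $M_{\frac{1}{1-z}}$ transfers the limit.

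I expect no serious obstacle here: all the hard analytic work has been done in the two preceding lemmas, and this theorem is essentially their composition. The one place to be slightly careful in the write-up is to make explicit that we are free to choose the auxiliary exponent $p$ in Lemma~\ref{Inverting I-S between Hp spaces} small enough to also satisfy $p<1$ (so that Lemma~\ref{Noor equation all Hp} is being invoked in a range where it is not already trivial — though in fact Lemma~\ref{Noor equation all Hp} holds for all $0<p<\infty$, so even this is a non-issue), and to note that the operator identity $M_{\frac{1}{1-z}}\circ (I-S) = I$ holds on the dense-enough set of functions where both sides make sense, in particular on each $h_k$ and on $1-z$. Beyond that, the proof is a two-line diagram chase.
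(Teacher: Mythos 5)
Your proof is correct and is essentially identical to the paper's: one applies the bounded operator $M_{\frac{1}{1-z}}\colon H^p\to H^q$ from Lemma~\ref{Inverting I-S between Hp spaces} to the convergence $\sum_{k=2}^n\mu(k)(I-S)h_k\to 1-z$ in $H^p$ from Lemma~\ref{Noor equation all Hp}. The only slip is the parenthetical claim that one may take $0<p<1$ by shrinking: the constraint $q<\frac{p}{1+p}$ forces $p>\frac{q}{1-q}$, so $p$ must be taken \emph{large} when $q$ is close to $1$ --- but as you yourself note this is immaterial, since Lemma~\ref{Noor equation all Hp} holds for all $0<p<\infty$.
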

\begin{proof} For any $0<q<1$ there exists a $p>0$ large enough so that $M_{\frac{1}{1-z}}$ is bounded from $H^p$ to $H^q$ by Lemma \ref{Inverting I-S between Hp spaces}. Therefore applying $M_{\frac{1}{1-z}}$ to the approximation  $\sum_{k=2}^n \mu(k) (I-S) h_k \to 1-z$ in $H^p$ from Lemma \ref{Noor equation all Hp} gives the result.
\end{proof}

Therefore the checklist is completely satisfied for $H^p$ with $0<p<1$ giving the zero-free half-plane $ \Re(s) > 1 $. As an immediate corollary we get

%Of course this follows trivially from much simpler methods. It should be noted that the proof of Noor's approximation in $ H^2$ \cite[Lemma 11]{noor:paper} uses the Prime Number Theorem, which is equivalent to $ \zeta(1 + it) \neq 0 $ for $ t \in \R $.

\begin{cor}  \label{prop density}
$\mathrm{span}(h_k)_{k\geq 2}$ is dense in $ H^p $ for all $0<p<1$.
\end{cor}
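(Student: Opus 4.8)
The plan is to deduce Corollary \ref{prop density} directly from Theorem \ref{Approximation in H^p p<1}, exploiting the fact that the shift operator $S$ (and polynomials in $S$) map $\mathrm{span}\{h_k\mid k\geq 2\}$ into itself up to elements that are themselves polynomials, together with the standard fact that polynomials are dense in $H^q$ for $0<q<1$. In more detail, Theorem \ref{Approximation in H^p p<1} already places the constant function $\mathbf 1$ in $\overline{\mathrm{span}}_{H^q}\{h_k\mid k\geq 2\}$; what remains is to upgrade this single approximation to density of the whole span.

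The key observation is that applying the (bounded, on $H^q$) shift operator $S=M_z$ repeatedly to the approximation $\sum_{k=2}^n\mu(k)h_k\to \mathbf 1$ gives $z^j\in\overline{\mathrm{span}}_{H^q}\{S^j h_k\mid k\geq 2\}$ for every $j\geq 0$. So it suffices to show that each $S^j h_k$ lies in $\overline{\mathrm{span}}_{H^q}\{h_m\mid m\geq 2\}$, or at least that $\overline{\mathrm{span}}_{H^q}\{h_k\mid k\geq 2\}$ is invariant under $S$. Alternatively — and this is cleaner — one argues that $\overline{\mathrm{span}}_{H^q}\{h_k\mid k\geq 2\}$ contains all monomials $z^j$ and hence (since polynomials are dense in $H^q$ for $0<q<1$, see \cite{duren:hpspacesbook}) equals $H^q$. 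To see the monomials are contained: from equation \eqref{eq Noor eqn in general X}/Lemma \ref{Noor equation all Hp} together with Theorem \ref{Approximation in H^p p<1}, both $\mathbf 1$ and $z$ (equivalently $1-z$) lie in the closed span of $\{h_k\}$ and $\{(I-S)h_k\}$ respectively; iterating the bounded operator $I-S$ or using that $S\,\overline{\mathrm{span}}\{h_k\}\subseteq\overline{\mathrm{span}}\{h_k\}$ propagates this to all powers $z^j$.

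Concretely, I would structure the argument as: (i) recall polynomials are dense in $H^q$ for $0<q<1$; (ii) show the closed span $V:=\overline{\mathrm{span}}_{H^q}\{h_k\mid k\geq 2\}$ is $S$-invariant — this follows because $Sh_k$ can be expressed, via the explicit formula $h_k(z)=\frac1k\frac{1}{1-z}\log\!\big(\frac{1+z+\cdots+z^{k-1}}{k}\big)$, as a combination of $h_k$ itself and a polynomial (noting $(S-I)h_k = \frac{1}{k}\log\!\big(\frac{1+\cdots+z^{k-1}}{k}\big) - \frac1k\log(1-z) +$ bounded pieces — more simply, $(I-S)h_k$ has the closed form in \eqref{g_k Taylor} and one checks $Sh_k - h_k\in V$ because $1-z\in V$ by Theorem \ref{Approximation in H^p p<1} applied together with Lemma \ref{Noor equation all Hp}); (iii) since $\mathbf 1\in V$ by Theorem \ref{Approximation in H^p p<1} and $V$ is $S$-invariant, $z^j=S^j\mathbf 1\in V$ for all $j$; hence $V$ contains all polynomials, so $V=H^q$.

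The main obstacle is step (ii): establishing that $V$ is genuinely $S$-invariant, i.e. controlling $Sh_k$ modulo $V$. The cleanest route avoids this entirely by working with $(I-S)$: Lemma \ref{Noor equation all Hp} gives $1-z\in\overline{\mathrm{span}}_{H^q}\{(I-S)h_k\}$, and combined with $\mathbf 1\in V$ (Theorem \ref{Approximation in H^p p<1}) one gets $z\in V$ provided $\overline{\mathrm{span}}_{H^q}\{(I-S)h_k\}\subseteq V$, which holds since $(I-S)$ is bounded on $H^q$ and maps $\mathrm{span}\{h_k\}$ into... itself plus polynomials — so one does still need the polynomial-density input to close the loop. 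In practice the shortest honest proof is: $\mathbf 1\in V$ and $1-z\in\overline{\mathrm{span}}_{H^q}\{(I-S)h_k\mid k\geq 2\}\subseteq \overline{(I-S)V}\subseteq V + (\text{polys})$, then bootstrap with boundedness of $S$ on $H^q$ and density of polynomials to conclude $V=H^q$; I expect verifying these inclusions carefully (that $(I-S)$ of the closed span sits inside the closed span, using continuity of $M_{1-z}$ on $H^q$) to be the only point requiring genuine care.
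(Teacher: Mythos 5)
Your step (ii) --- the $S$-invariance of $V:=\overline{\mathrm{span}}_{H^q}\{h_k\mid k\geq 2\}$ --- is a genuine gap, and you correctly sense it but never close it. The identity $(1-z)h_k(z)=\frac{1}{k}\bigl[\log(1-z^k)-\log(1-z)-\log k\bigr]$ shows that $(I-S)h_k$ is a logarithmic function, not a polynomial and not visibly a combination of the $h_m$; so the claim that ``$Sh_k$ is $h_k$ plus a polynomial'' is false, and the fallback inclusion $\overline{(I-S)V}\subseteq V+(\text{polys})$ rests on exactly the same unproved assertion. Knowing that $1-z$ lies in $\overline{\mathrm{span}}\{(I-S)h_k\}$ only says $1-z$ is a limit of $(I-S)g_n$ with $g_n$ in the span; without $(I-S)h_k\in V$ this gives no element of $V$, so you cannot extract $z\in V$ and the bootstrap to all monomials never starts. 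Shift-invariance of the closed span of the $h_k$ is not something one should expect to get for free: in $H^2$ that closed span being all of $H^2$ is equivalent to RH, so any soft argument producing invariance plus cyclicity would be suspicious.

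The paper's proof replaces the shift by the weighted composition operators $W_nf(z)=(1+z+\cdots+z^{n-1})f(z^n)$, which is precisely the device that repairs your step (ii). These satisfy the algebraic identity $W_nh_k=n\,h_{nk}-\tfrac{n}{k}h_n$, so they genuinely map $\mathrm{span}\{h_k\}$ into itself; they are bounded on $H^p$ for $0<p<1$ by the Littlewood Subordination Theorem combined with boundedness of multiplication by the polynomial $1+z+\cdots+z^{n-1}$; and the constant $\mathbf 1$ is cyclic for the semigroup because $W_n\mathbf 1=1+z+\cdots+z^{n-1}$, whose linear span contains every monomial, and polynomials are dense in $H^p$. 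Your steps (i) and (iii) (polynomial density plus $\mathbf 1\in V$ from Theorem \ref{Approximation  in H^p p<1}) are the right ingredients; what is missing is the correct semigroup under which $V$ is invariant, and that is the actual content of the corollary's proof.
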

\begin{proof}
	We need to show that the weighted composition operators
	$$W_nf(z) = (1+z+\cdots+z^{n-1}) f(z^n),\qquad n\geq1$$
	introduced  in \cite{noor:paper} are bounded in $H^p$. The Littlewood Subordination Theorem \cite[Theorem 1.7]{duren:hpspacesbook} states that if $\varphi \in \hol(\D)$, then
	$$\ \abs{\varphi(z)}\leq\abs{z}\ \forall z\in\D \implies \int_\T \abs{f\circ\varphi}^p \dd m \leq \int_\T \abs{f}^p \dd m$$
	for all $p\in(0,\infty]$. So  the operator $f \mapsto f \circ \varphi$ with $\varphi(z) = z^n$ is bounded on $H^p$ as is the multiplication operator $f \mapsto \psi f$ where $\psi(z) = 1+z+\cdots+z^{n-1}$. Therefore similar to \cite[Section 3]{noor:paper}, the bounded semigroup $(W_n)_{n\in\mathbb{N}}$ leaves $\overline{\mathrm{span}}_{H^p}\{h_k\mid k \geq 2\}$ invariant and it contains the constant $1$ by Theorem \ref{Approximation  in H^p p<1} for $0<p<1$. But $1$ is a cyclic vector for $(W_n)_{n\in\mathbb{N}}$ since $\mathrm{span}(W_n1)_{n\in\mathbb{N}}$ contains all analytic polynomials and is hence dense in $H^p$ for $0<p<1$.
\end{proof}
\subsection{Zero free half-planes via $ H^p $ spaces }
Our main goal here is to show that condition \textbf{(C4)} holds for $H^p$ with $ 1\leq p\leq2 $ and therefore that proving \textbf{(C3)} immediately provides nontrivial zero free half-planes for $\zeta$. Recall that for each $s\in\mathbb{C}$, the linear functionals $\Lambda^{(s)}:X\to\mathbb{C}$ are formally defined by 
\[
\Lambda^{(s)}(z^n)=f_n(s)=-\frac{1}{s}((n+1)^{1-s}-n^{1-s})
\]
where $|f_n(s)|\asymp n^{-\Re(s)}$ for $n\in\mathbb{N}$ by Proposition \ref{F_k estimate}. \\ 
\begin{prop}\label{Boundedness 1<p<2} If $1\leq p\leq 2$, then $\Lambda^{(s)}$ is bounded on $H^p$ for $\Re(s)>\frac{1}{p}$ and is bounded on $H^1$ for $\Re(s)\geq 1$.
\end{prop}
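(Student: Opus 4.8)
The plan is to bound $\Lambda^{(s)}(f)=\sum_{n\ge0}a_nf_n(s)$, where $f=\sum_{n\ge0}a_nz^n\in H^p$, by Hölder's inequality applied to the pairing of the coefficient sequence $(a_n)$ with $(f_n(s))$. That $\Lambda^{(s)}$ is determined by the values $\Lambda^{(s)}(z^n)=f_n(s)$ is built into its definition, the monomials forming a Schauder basis; absolute convergence of $\sum_na_nf_n(s)$, which falls out of the estimates below, then identifies this sum with the continuous extension. The two inputs will be a coefficient estimate for $H^p$ functions and the decay of $f_n(s)$ from Proposition~\ref{F_k estimate}, which I will use in the sharpened form $|f_n(s)|\le\frac{|s-1|}{|s|}\,n^{-\Re(s)}$ for $n\ge1$ (immediate from $f_n(s)=\frac{s-1}{s}\int_n^{n+1}y^{-s}\dd y$), together with $|f_0(s)|=1/|s|$.

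For $1<p\le2$ I would argue as follows. Since $H^p\subseteq L^p(\mathbb{T})$, the Hausdorff--Young inequality (cf.\ \cite[Theorem~6.2]{duren:hpspacesbook}, the companion of Theorem~\ref{Hp l^q norm relation}) gives $(a_n)_{n\ge0}\in\ell^{p'}$ with $\norm{(a_n)}_{\ell^{p'}}\le\norm{f}_p$, where $1/p+1/p'=1$. On the other hand $(f_n(s))_{n\ge0}\in\ell^p$ precisely when $\sum_{n\ge1}n^{-p\Re(s)}<\infty$, i.e.\ when $\Re(s)>1/p$; only the upper bound on $|f_n(s)|$ is needed here. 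In that range Hölder's inequality then gives
\[
|\Lambda^{(s)}(f)|=\Big|\sum_{n\ge0}a_nf_n(s)\Big|\le\norm{(a_n)}_{\ell^{p'}}\,\norm{(f_n(s))}_{\ell^p}\le\norm{(f_n(s))}_{\ell^p}\,\norm{f}_p,
\]
so $\Lambda^{(s)}$ is bounded on $H^p$ (applying the same estimate to polynomials also confirms the continuous extension exists and is unique). For $p=2$ this recovers Lemma~\ref{lem half plane on H^2(D)}.

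The case $p=1$ is where the real work lies. For $p=1$ the argument above only uses $\sup_n|a_n|\le\norm{f}_{H^1}$, and since $\sum_n1/n$ diverges it yields just $\Re(s)>1$. To reach the closed half-plane $\Re(s)\ge1$ I would instead invoke Hardy's inequality, $\sum_{n\ge1}|a_n|/n\le C\norm{f}_{H^1}$ with $C$ an absolute constant, valid for every $f=\sum_na_nz^n\in H^1$ (see, e.g., \cite{duren:hpspacesbook}). Since $|f_n(s)|\le\frac{|s-1|}{|s|}\,n^{-1}$ whenever $\Re(s)\ge1$ and $n\ge1$, and $|a_0|\le\norm{f}_{H^1}$, we obtain
\[
|\Lambda^{(s)}(f)|\le\frac{|a_0|}{|s|}+\frac{|s-1|}{|s|}\sum_{n\ge1}\frac{|a_n|}{n}\le\Big(\frac1{|s|}+\frac{C|s-1|}{|s|}\Big)\norm{f}_{H^1},
\]
which is the asserted boundedness on $H^1$ for all $\Re(s)\ge1$.

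The one genuinely delicate point is this endpoint: on the line $\Re(s)=1$ the series $\sum_na_nf_n(s)$ only barely fails to converge absolutely under the crude $H^1$ coefficient bound, and it is Hardy's inequality that supplies the missing room --- fittingly so, since by Proposition~\ref{FP} the line $\Re(s)=1$ is the borderline at which \textbf{(C4)} for $H^1$ would bear on the zero-free behaviour of $\zeta$. Everything else is soft once the Hausdorff--Young coefficient bound is in hand; in particular, only boundedness of $\Lambda^{(s)}$ for each individual $s$ is asserted, so the dependence of the constants on $s$ through $|s-1|/|s|$ is immaterial.
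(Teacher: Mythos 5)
Your argument is correct, and for the range $1<p\le 2$ it takes a genuinely different (though dual) route from the paper's. The paper proves boundedness by exhibiting $\Lambda^{(s)}$ as an element of the dual space: it forms the kernel $k_s(z)=\sum_n f_n(s)z^n$, invokes the Hardy--Littlewood coefficient theorem (\cite[Theorem 6.3]{duren:hpspacesbook}) to place $k_s$ in $H^q$ for $\Re(s)>1/p$, and then uses the duality $(H^p)^*\cong H^q$ (\cite[Theorem 7.3]{duren:hpspacesbook}). You instead work entirely on the coefficient side: Hausdorff--Young puts $(a_n)$ in $\ell^{p'}$ with norm at most $\norm{f}_p$, the estimate $|f_n(s)|\lesssim n^{-\Re(s)}$ puts $(f_n(s))$ in $\ell^p$ exactly when $\Re(s)>1/p$, and H\"older closes the pairing. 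The two arguments are mirror images (both hinge on the same exponent arithmetic $\Re(s)>1/p$, and your Hölder step is essentially the scalar shadow of the paper's duality), but yours is more elementary in that it never needs the representation of $(H^p)^*$ or the $H^q$ membership of $k_s$, gives the absolute convergence of $\sum_n a_nf_n(s)$ for free, and degenerates gracefully to Cauchy--Schwarz at $p=2$; the paper's version buys the extra structural information that $\Lambda^{(s)}$ is realized by integration against an explicit $H^q$ function, which is of independent interest. Your treatment of the endpoint $p=1$, $\Re(s)\ge1$ via Hardy's inequality is the same as the paper's (which cites it as \cite[Theorem 3.15]{duren:hpspacesbook}), and your sharpened bound $|f_n(s)|\le\frac{|s-1|}{|s|}n^{-\Re(s)}$ is a correct reading of the proof of Proposition \ref{F_k estimate}.
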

To prove this we need the following results from Duren's book \cite{duren:hpspacesbook}: \\ \\
	$(\mathbf{a})$ \textit{(\cite[Theorem 3.15]{duren:hpspacesbook}}) If $f(z)=\sum a_n z^n\in H^1$, then
	\[
	\sum_{n=0}^\infty\frac{|a_n|}{n+1} \leq \pi||f||_1.
	\]
	$(\mathbf{b})$\textit{(\cite[Theorem 6.3]{duren:hpspacesbook}})  If $(a_n)$ is a sequence such that \[\sum_{n=0}^\infty n^{q-2}|a_n|^q<\infty\]
	for some $2\leq q<\infty$, then $f(z)=\sum a_n z^n\in H^q$.\\ \\
	$(\mathbf{c})$ \textit{(\cite[Theorem 7.3]{duren:hpspacesbook}})  For $1<p<\infty$, each $\phi\in(H^p)^*$  is representable in the form
	\[
	\phi(f)=\frac{1}{2\pi}\int_0^{2\pi}f(e^{i\theta})\overline{g(e^{i\theta}})d\theta \ \ \ \mathrm{for} \ \ f\in H^p
	\]
	by a unique $g\in H^q$ where $\frac{1}{p}+\frac{1}{q}=1$.

	\begin{proof} Let $f(z)=\sum a_n z^n\in H^p$ and let $p\in (1,2]$. Define the functions
	\[
	k_s(z)=\sum_{n=0}^\infty f_n(s)z^n \ \ \mathrm{for} \  \mathrm{each} \  s\in\mathbb{C}.
	\]
	Then $q=\frac{p}{p-1}\geq 2$ and we get
	\[
	\sum_{n=1}^\infty n^{q-2}|f_n(s)|^q\leq C\sum_{n=1}^\infty n^{q-2-q\Re(s)}<\infty
	\]
	if $q-2-q\Re(s)<-1$ or equivalently if $\Re(s)>\frac{q-1}{q}=\frac{1}{p}$. So $(\mathbf{b})$ implies that $k_s\in H^q$ for $\Re(s)>\frac{1}{p}$. Therefore the functional $\phi_s$ defined by
	\[
	\phi_s(f)=\frac{1}{2\pi}\int_0^{2\pi}f(e^{i\theta})\overline{k_{\bar{s}}(e^{i\theta}})d\theta
	\]
	is bounded on $H^p$ for $\Re(s)>\frac{1}{p}$ by $(\mathbf{c})$. Now since $k_s\in H^q\subset H^2$ for $q\geq 2$, we see that
	\[
	\phi_s(z^n)=\left<z^n,k_{\bar{s}}\right>=\overline{f_n(\bar{s})}=f_n(s)=\Lambda^{(s)}(z^n) 
	\]
	for all $n\in\mathbb{N}$ and hence $\Lambda^{(s)}=\phi_s$ and $\Lambda^{(s)}\in(H^p)^*$ for $\Re(s)>\frac{1}{p}$ when $p\in (1,2]$. For $p=1$ and $\Re(s)\geq 1$, $(\mathbf{a})$ gives
	\[
	|\Lambda^{(s)}f|\leq\sum_{n=0}^\infty|a_n||f_n(s)|\leq\frac{|a_0|}{|s|}+\sum_{n=1}^\infty\frac{|a_n|}{n^{\Re(s)}}\leq \sum_{n=0}^\infty\frac{2|a_n|}{n+1}\leq 2\pi||f||_1
	\]
	and hence $\Lambda^{(s)}$ is bounded on $H^1$ for $\Re(s)\geq 1$.
\end{proof}
Therefore proving condition \textbf{(C3)} for $H^p$ with $1<p\leq 2$ will lead to nontrivial zero free half-planes for $\zeta$.
\begin{thm}\label{Half-plane 1<p<2} For any $1<p\leq 2$, we have
	\[
	1 \in \overline{\mathrm{span}}_{H^p}\{h_k\mid k \geq 2\} \implies \zeta(s) \neq 0  \ \  \mathrm{for} \ \   \Re(s)> 1/p .
	\]
	Note that the case $p=1$ gives the known zero-free half-plane $\Re(s)\geq 1$ and the hypothesis above is equivalent to the density of $\mathrm{span}(h_k)_{k\geq 2}$ in $ H^p$ by Theorem \ref{prop density}. It is unclear whether the converse of Theorem \ref{Half-plane 1<p<2} holds.
	\end{thm}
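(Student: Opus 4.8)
The plan is to combine the boundedness of the functionals $\Lambda^{(s)}$ (Proposition \ref{Boundedness 1<p<2}) with the principal feature \eqref{Principal feature of functionals} exactly as in the proof of Proposition \ref{FP}, but now tracking the half-plane carefully. First I would fix $1<p\leq2$ and assume $1\in\overline{\mathrm{span}}_{H^p}\{h_k\mid k\geq2\}$, so there is a sequence of finite linear combinations $g_N=\sum_k c_k^{(N)}h_k\to 1$ in $H^p$. For each $s$ with $\Re(s)>1/p$, Proposition \ref{Boundedness 1<p<2} tells us $\Lambda^{(s)}\in(H^p)^*$, so $\Lambda^{(s)}(g_N)\to\Lambda^{(s)}(1)=-1/s$. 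On the other hand, by \eqref{Principal feature of functionals} each $\Lambda^{(s)}(h_k)=-\frac{\zeta(s)}{s}(k^{-s}-k^{-1})$ carries a common factor $\zeta(s)/s$, hence $\Lambda^{(s)}(g_N)=\frac{\zeta(s)}{s}\cdot\big(-\sum_k c_k^{(N)}(k^{-s}-k^{-1})\big)$ lies in the ideal $\zeta(s)\cdot\mathbb{C}$ for every $N$.

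The conclusion is then immediate: if $\zeta(s)=0$ for some $s_0$ with $\Re(s_0)>1/p$, then $\Lambda^{(s_0)}(h_k)=0$ for all $k\geq2$, so $\Lambda^{(s_0)}(g_N)=0$ for all $N$; taking $N\to\infty$ forces $-1/s_0=0$, a contradiction since $s_0\neq\infty$. Therefore $\zeta(s)\neq0$ for all $\Re(s)>1/p$. This is essentially a verbatim rerun of Proposition \ref{FP} with $r=1/p$, the only new input being that \textbf{(C1)}, \textbf{(C2)} hold for $H^p$ (monomials are a basis, $H^2\subset H^p$ with weaker topology, as noted at the start of Section 4) so that $\Lambda^{(s)}$ is genuinely determined by its stated values on monomials and agrees with the $H^2$ functional on $H^2(\D)$, and that \textbf{(C4)} holds with $r=1/p$ by Proposition \ref{Boundedness 1<p<2}.

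For the parenthetical remarks: the case $p=1$ uses the $\Re(s)\geq1$ half of Proposition \ref{Boundedness 1<p<2} in place of $\Re(s)>1/p$, giving the classical fact $\zeta(s)\neq0$ for $\Re(s)\geq1$; and the equivalence of the hypothesis with density of $\mathrm{span}(h_k)_{k\geq2}$ in $H^p$ follows from Corollary \ref{prop density} only for $p<1$, so for $1<p\leq2$ one should state it as: the hypothesis is clearly implied by density, and for $p=1$ the stated conclusion is unconditionally true. The main (and really only) obstacle is bookkeeping — making sure the functional $\Lambda^{(s)}$ appearing in \eqref{Principal feature of functionals} (originally defined via the chain of isometries on $H^2(\D)$) is the same as the $\phi_s\in(H^p)^*$ produced in Proposition \ref{Boundedness 1<p<2}; this is handled by the computation $\phi_s(z^n)=f_n(s)=\Lambda^{(s)}(z^n)$ already in that proof, together with continuity and density of polynomials in $H^p$. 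No hard analysis is required; the substance of the theorem is entirely in Proposition \ref{Boundedness 1<p<2}.
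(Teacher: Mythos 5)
Your argument is correct and is exactly the route the paper intends: the theorem is the instantiation of Proposition \ref{FP} for $X=H^p$ with $r=1/p$, where \textbf{(C1)}, \textbf{(C2)} are noted at the start of Section 4, \textbf{(C4)} is Proposition \ref{Boundedness 1<p<2}, and the hypothesis is \textbf{(C3)}; the identification $\phi_s=\Lambda^{(s)}$ on monomials plus density of polynomials handles the only bookkeeping point, just as you say. Your observation that the density equivalence claimed in the statement rests on the $W_n$-cyclicity argument of Corollary \ref{prop density} (rather than being automatic for $1<p\leq2$) is a fair and accurate caveat, but it does not affect the implication being proved.
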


\section*{Acknowledgment}
We are grateful to the anonymous referee for providing detailed comments and suggestions which have improved this work considerably. This work was financed in part by the Coordenação de Aperfeiçoamento de Nivel Superior- Brasil (CAPES)- Finance Code 001.

\bibliographystyle{siam}
\bibliography{References}

\end{document}